\definecolor{dark-red}{rgb}{0.5,0.15,0.15}
\definecolor{dark-blue}{rgb}{0.15,0.15,0.6}
\definecolor{dark-green}{rgb}{0.15,0.6,0.15}
\newcommand{\Z}{\mathbb{Z}}
\newcommand{\Q}{\mathbb{Q}}
\newcommand{\G}{\mathbb{G}}
\newcommand{\R}{\mathbb{R}}
\newcommand{\C}{\mathbb{C}}
\newcommand{\Hom}{\mathop{\mathrm{Hom}}}
\newcommand{\Ext}{\mathrm{Ext}}
\newcommand{\sdet}{\langle \text{det} \rangle}
\newcommand{\Mapc}{\text{Map}^c}
\DeclareMathOperator{\Spec}{Spec}
\DeclareMathOperator{\OX}{\mathcal{O}_{\mathcal{X}}}
\DeclareMathOperator{\IZ}{I_{\Z}}
\DeclareMathOperator{\Pic}{Pic}
\DeclareMathOperator{\Ker}{Ker}
\DeclareMathOperator{\holim}{holim}
\newtheorem{theorem}{Theorem}[section]
\newtheorem{lemma}[theorem]{Lemma} 
\newtheorem{cor}[theorem]{Corollary}
\newtheorem{example}[theorem]{Example}
\newtheorem{prop}[theorem]{Proposition} \theoremstyle{definition}
\newtheorem{rem}[theorem]{Remark} 
\newtheorem{definition}[theorem]{Definition}
\Crefname{cor}{Corollary}{Corollaries}
\Crefname{conjecture}{Conjecture}{Conjectures}
\Crefname{rem}{Remark}{Remarks}
\Crefname{prop}{Proposition}{Propositions}	
\Crefname{question}{Question}{Questions}
\title{$K$-theory, reality, and duality}
\author{Drew Heard}
\address{Melbourne University, Australia}
\email{d.heard@student.unimelb.edu.au}
\author{Vesna Stojanoska}
\address{Massachusetts Institute of Technology, Cambridge MA}
\email{vstojanoska@math.mit.edu}
\thanks{The second author is partially supported by NSF grant DMS-1307390}
\begin{document}

\begin{abstract}
We show that the real $K$-theory spectrum $KO$ is Anderson self-dual using the method previously employed in the second author's calculation of the Anderson dual of $Tmf$. Indeed the current work can be considered as a lower chromatic version of that calculation. Emphasis is given to an algebro-geometric interpretation of this result in spectrally derived algebraic geometry. We finish by applying the result to a calculation of 2-primary Gross-Hopkins duality at height 1, and obtain an independent calculation of the group of exotic elements of the $K(1)$-local Picard group.
\end{abstract}
\maketitle

\section{Introduction}

The purpose of this note is to explain some results on duality for complex and real $K$-theory and their ramifications for the $K(1)$-local Picard group at the prime 2. The results obtained herein are not new to experts, and the major ingredients can be found in the literature. However, to the best of the authors' knowledge, those ingredients have not been previously blended together in the manner presented in this paper. That perspective and approach was of great use as a guiding example for the second author in her work on duality for topological modular forms~\cite{stojanoska2012duality}.

There are two main forms of duality which algebraic topologists have used in $K(n)$-local settings, namely, Spanier-Whitehead duality, which is simply the monoidal duality, and Gross-Hopkins duality, which is a $K(n)$-local analogue of Pontryagin or Brown-Comenetz duality. We will be interested in an integral version of the latter, namely Anderson duality. This is not $K(n)$-local, though upon such localization is closely related to Gross-Hopkins duality. Instead, it is defined on the category of spectra and makes surprising appearances in geometry and the study of quantum field theories \cite{hopkins2005quadratic, FreedMooreSegal}.

Consider the $C_2$-action on the periodic $K$-theory spectrum $KU$ via complex conjugation. The main computational result in this paper is the following
\theoremstyle{plain}
\newtheorem*{thm:Andersondual}{\Cref{thm:Andersondual}}

\begin{thm:Andersondual}
The Anderson dual $I_\Z KU$ is $C_2$-equivariantly equivalent to $\Sigma^4KU$.
\end{thm:Andersondual}

 The real $K$-theory spectrum $KO$ is the spectrum of homotopy fixed points $KU^{hC_2}$, and the above duality is reflected in the following result as a ``wrong side computation."

\newtheorem*{thm:ushriek}{\Cref{thm:ushriek}}
\begin{thm:ushriek}
The forgetful map \[u_*:(KO\text{-mod})\to (S\text{-mod})\] has a right adjoint $u^!=F(KO,-)$ such that for a spectrum $A$
 \[ u^!A =F(KO, A) \simeq F ( I_\Z A, \Sigma^4 KO).\]
\end{thm:ushriek}

This theorem has a derived algebro-geometric interpretation which we pursue in~\Cref{sec:andersonduality}.

The first step of the proof of~\Cref{thm:Andersondual} consists of a simple calculation that $KU$ is Anderson self-dual. We then proceed to develop a descent strategy for deducing the above-stated result. The fact that $KO$ is the homotopy fixed points of $KU$ under the conjugation action is insufficient for descending duality. Notwithstanding, we show that $KO$ is also equivalent to the homotopy orbits of $KU$ under the same $C_2$ action by proving that the associated Tate spectrum vanishes. This allows a calculation of the Anderson dual of $KO$, and from there we complete the proof of~\Cref{thm:Andersondual}.

As an application of the above theorems, we use the relationship between Anderson duality and $K(1)$-local Gross-Hopkins duality to independently conclude the well-known fact that the $K(1)$-local category contains ``exotic" elements in its Picard group.

The organization of this paper is as follows. In~\Cref{sec:andersonbasics} we define Anderson duality, and in~\Cref{sec:andersonduality} we give a detailed algebro-geometric interpretation of the duality in general, as well as an  interpretation of~\Cref{thm:ushriek} using derived algebraic geometry. Note that the proof of~\Cref{thm:ushriek}, while included in~\Cref{sec:andersonduality}, depends on the results of Sections \ref{sec:norm} through \ref{sec:KOanderson}; these Sections build to the proof of~\Cref{thm:Andersondual}. In particular, in~\Cref{sec:HFPSS} we use equivariant homotopy theory to deduce, from scratch, the differentials in the homotopy fixed point spectral sequence for $KO\simeq KU^{hC_2}$. Finally, in~\Cref{sec:picard} we study the implications of~\Cref{thm:Andersondual} for the invertible $K(1)$-local spectra and deduce that an exotic such spectrum must exist.

\begin{rem}
Where possible we have included information that, though known to experts, is perhaps not easily or at all found in the literature, such as the use of equivariant homotopy to determine a differential in a homotopy fixed point spectral sequence which we learned from Mike Hopkins and Mike Hill.
\end{rem}

\section*{Conventions}
We will denote homotopy fixed point and orbit spectral sequences by $\operatorname{HFPSS}$ and $\operatorname{HOSS}$ respectively. In all spectral sequence diagrams a box represents a copy of $\Z$ whilst a dot is a copy of $\Z/2$. Vertical lines represent multiplication by 2, whilst lines of slope 1 represent multiplication by $\eta$. All spectral sequences will be Adams indexed; that is, a class in cohomological degree $s$ and internal degree $t$ will be drawn in position $(t-s,s)$. 
By a commutative ring spectrum we will always mean a highly structured commutative ring.

\section*{Acknowledgements}
The authors thank Paul Goerss for his invaluable suggestions to greatly improve earlier versions of this work.
The second author also thanks Jeremiah Heller for helpful discussions about $C_2$-equivariant homotopy theory and for a careful proofreading of a draft of this document. 

\section{Background on Anderson duality}\label{sec:andersonbasics}

The functor on spectra $X \mapsto \Hom(\pi_{-*}X,\Q/\Z)$ is a cohomology theory since $\Q/\Z$ is injective. Let $I_{\Q/\Z}$ denote the representing spectrum. The Brown-Comenetz dual of $X$ is then defined as the function spectrum $I_{\Q/\Z}X = F(X,I_{\Q/\Z})$. In a similar way we can define $I_\Q$ to be the spectrum representing the functor $X \mapsto \Hom(\pi_{-\ast} X,\Q)$, and there is a natural map $I_{\Q} \to I_{\Q/\Z}$. We denote the homotopy fiber of this map by $I_\Z$. Of course, $I_\Q$ is the rational Eilenberg-MacLane spectrum. For a spectrum $X$ we then define $I_\Z  X$ as the function spectrum $F(X,I_\Z)$. This contravariant functor $\IZ$ on the category of spectra is known as Anderson duality, first used by Anderson~\cite{anderson1969universal}; see also~\cite{yosimura1975universal,hopkins2005quadratic}. We shall see that the representing spectrum $\IZ$ is the dualizing object in the category of spectra (see \Cref{sec:andersonduality}).

\begin{example}
If $A$ is a finite group, then the Brown-Comenetz dual of the Eilenberg-MacLane spectrum $HA$ is again $HA$, via a non-canonical isomorphism between $A$ and its Pontryagin dual. Since there are no non-zero maps $A\to \Q$, it follows that the Anderson dual $I_{\Z} HA $ is equivalent to $\Sigma^{-1} HA$.
\end{example}

For a spectrum $X$, we can use a short spectral sequence to calculate the homotopy of $I_\Z X$. By considering the long exact sequence in homotopy of the fiber sequence
\begin{equation}\label{eq:fibSeq}
I_\Z X \to I_\Q X \to I_{\Q/\Z} X,  
\end{equation}
we can form an exact couple and hence a spectral sequence, 
\begin{equation}\label{eq:andersonSS}
\text{Ext}^s_\Z(\pi_tX,\Z) \Rightarrow \pi_{-t-s}I_\Z X,
\end{equation}
where the spectral sequence is only non-zero for $s=0,1$. 

Often this leads to very simple calculations, for example in the case of $KU$. Here, of course, $\pi_* KU $ is the ring of Laurent polynomials $ \Z[u^{\pm 1}]$ on the Bott element $u$ of degree $2$. Hence we easily conclude that the homotopy groups $\pi_* I_\Z KU$ are $\Z$ in even degrees and $0$ in odd degrees. We can do more by observing the additional available structure; note that if $R$ is a ring spectrum, then $I_\Z R=F(R,I_\Z)$ is a module over $R$. The following easy observation has crucial applications.

\begin{prop}\label{prop:moduleeq}
Let $R$ be a homotopy ring spectrum\footnote{i.e. a ring object in the homotopy category} and let $M$ be an $R$-module such that $\pi_* M$ is, as a graded module, free of rank one over $\pi_* R$, generated by an element in $\pi_t M$. Then $M\simeq \Sigma^t R$.
\end{prop}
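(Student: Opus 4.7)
The plan is to construct an explicit map $\Sigma^t R \to M$ of $R$-modules (in the homotopy category) that realizes the isomorphism of graded $\pi_*R$-modules given by multiplication by the generator, and then invoke Whitehead's theorem.

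Concretely, the generator $x \in \pi_t M$ is represented by a map $\widetilde{x}\colon S^t \to M$. Using the unit $\eta\colon S \to R$ and the action map $\mu\colon R \wedge M \to M$ encoding the $R$-module structure on $M$, I would define $\varphi\colon \Sigma^t R \to M$ as the composite
\[
\Sigma^t R \simeq R \wedge S^t \xrightarrow{\ \id \wedge \widetilde{x}\ } R \wedge M \xrightarrow{\ \mu\ } M.
\]
On homotopy, $\varphi_*$ sends a class $r \in \pi_s R$ to the product $r \cdot x \in \pi_{s+t} M$, simply by naturality of the smash product and the definition of the $\pi_*R$-module structure on $\pi_* M$.

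By hypothesis, $\pi_* M$ is free of rank one over $\pi_* R$ with generator $x$, which is exactly the assertion that the map $\pi_* R \to \pi_{*+t} M$ given by $r \mapsto r \cdot x$ is an isomorphism. Hence $\varphi_*$ is an isomorphism of graded abelian groups, and Whitehead's theorem implies that $\varphi$ is a weak equivalence $\Sigma^t R \simeq M$.

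There is no real obstacle here beyond bookkeeping; the only point one has to be slightly careful about is that $R$ is only a ring in the homotopy category and $M$ only a homotopy module, but the construction of $\varphi$ and the verification that $\varphi_* = (- \cdot x)$ both take place entirely at the level of the homotopy category, so no finer structure is needed.
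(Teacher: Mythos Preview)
Your proposal is correct and is essentially identical to the paper's own proof: the paper represents the generator by a map $S^t\to M$, extends it via the free--forgetful adjunction to an $R$-module map $\Sigma^t R\to M$, and observes that the hypothesis makes this an equivalence. You have simply made the adjunction explicit by writing out the composite $R\wedge S^t\xrightarrow{\id\wedge\widetilde x}R\wedge M\xrightarrow{\mu}M$ and spelling out the effect on homotopy, which is exactly what the paper compresses into one sentence.
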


\begin{proof}
The generating element is represented a map of spectra $S^t \to M$ which, by the standard adjunction can be extended to a map of $R$-modules $S^t\wedge R \to M$ using the $R$-module structure of $M$.
The assumptions ensure that $\varphi$ is an equivalence.
\end{proof}

Since $ \Hom_\Z(\Z[u^{\pm 1},\Z])$ is a free $\Z[u^{\pm 1}]$-module of rank one, an immediate corollary of this proposition is that $I_\Z KU$ is equivalent to $KU$, i.e. $KU$ is Anderson self-dual.

Similarly, we can run the spectral sequence~\eqref{eq:andersonSS} for $I_\Z KO $ and conclude that $\pi_k I_\Z KO$ is $\Z$ when $k$ is divisible by $4$, $\Z/2$ when $k$ is congruent to $5$ and $6$ modulo $8$, and zero otherwise. This indicates that perhaps $I_\Z KO$ is a fourfold suspension of $KO$, which we show is true in~\Cref{theorem:andersonKO}. However, determining the $KO$-module structure is tricky from the direct approach that works for $I_\Z KU$ because multiplication by $\eta$ on the dual non-torsion classes cannot be detected algebraically. Thus we proceed using a more sophisticated approach.\footnote{The reader interested in the direct approach is referred to the appendix of \cite{FreedMooreSegal} for details.}

A natural question that arises at this point is whether we can use the self-duality of $KU$ to deduce any information about the Anderson dual of the real $K$-theory spectrum $KO$, using (only) that $KO$ is the homotopy fixed point spectrum $KU^{hC_2}$, where $C_2 = \langle c | c^2=1 \rangle$ acts on $KU$ by complex conjugation.
In other words, we would like to know 
\begin{equation}\label{eq:AD}
I_\Z KO \simeq I_\Z KU^{hC_2} \simeq F(KU^{hC_2},I_\Z)
\end{equation}
as a $KO$-module. Note that homotopy fixed points are the right, not left adjoint to the forgetful functor, hence we cannot deduce much solely from the definitions and the self-duality of $KU$. However, we show in~\Cref{sec:tate} that $KO$ is also the homotopy orbit spectrum $KU_{hC_2}$, and then proceed to the identification of $I_\Z KO$.

\section{Algebro-geometric meaning of Anderson duality}\label{sec:andersonduality}

Before proceeding to the computational specifics for $K$-theory, we would like to put the above duality discussion in a broader perspective. Namely, we claim that Anderson duality naturally occurs from a categorical viewpoint, as it is related to the existence of ``wrong side" adjoints, or from an algebro-geometric viewpoint as such adjoints signify the compactness of certain geometric objects.\footnote{For example, compact manifolds have Poincar\'e duality, and proper smooth schemes have Serre duality.}

Perhaps the first notion of duality one encounters is that for vector spaces over a field $k$; the dual $V^\vee$ of a vector space $V$ is the space of linear maps from $V$ to the base field $k$, and has the universal property that linear maps $W\to V^\vee$ are in bijection with bilinear pairings $W\otimes V\to k$. If we restrict our attention to finite dimensional vector spaces, we can recover $V$ from its dual as $V\cong (V^\vee)^\vee$.

As is well known, if we try to directly imitate this situation in the more general case of (finitely generated) modules over a ring $R$ and define a ``naive" dual of $M$ as $\Hom_R(M,R)$, perhaps the first problem we encounter is the inexactness of the functor $\Hom$. For example, if $R=\Z$ and $M$ is finite, $\Hom_\Z(M,\Z)$ is zero, so the naive dual cannot recover $M$. The initial obstacle is easily resolved by passing to the derived category of (complexes) of $R$-modules. One observes that for this to work properly, $R$ ought to have finite injective dimension as a module over itself.	 If it does not, we may still be able to define good dual modules by mapping into some other object instead of $R$. Hence we arrive at the following definition.

\begin{definition}\label{def:dualizingalg}
An $R$-module\footnote{i.e. an object of the derived category of complexes of $R$-modules} $D$ is dualizing if 
\begin{enumerate}
\item\label{c1} for any finitely generated module $M$, the natural double duality map \[M\to \Ext_R(\Ext_R(M,D),D)\] is an isomorphism, and
\item\label{c2} $D$ has finite injective dimension; in other words, if a module $M$ is bounded below, then its $D$-dual $\Ext_R(M,D)$ is bounded above.
\end{enumerate}
Then $I_D M:=\Ext_R(M,D)$ is called the dual of $M$, and the contravariant functor $I_D$ represented by $D$ is called a (representable) duality functor.
\end{definition}
Note that if condition \eqref{c2} holds, and the map $R\to \Ext_R(D,D)$ is an isomorphism, i.e. if condition \eqref{c1} holds for $M=R$, then \eqref{c1} holds for any $M$; thus checking whether an object $D$ is dualizing requires relatively little work.

\begin{example}\label{exam:dualz} The following example may provide some motivation for the definition of the Anderson dual as the fiber of $I_\Q \to I_{\Q/\Z}$. The complex $\Q \to \Q/\Z$ is an injective resolution of $\Z$; hence we can use it to compute $I_\Z(M)=\Ext_\Z(M,\Z)$ for any abelian group $M$ as the two-term complex $I_\Q(M)=\Hom_\Z(M,\Q)\xrightarrow{\pi} \Hom_\Z(M,\Q/\Z)=I_{\Q/\Z}(M)$. If $M$ is a finite abelian group, then $I_\Z(M)$ and $ I_{\Q/\Z}(M)$ are equal, up to a shift, so the double duality map $M\to I_\Z(I_\Z(M))$ is an isomorphism. For finitely generated non-torsion $M$, the map $\pi$ is surjective, so $I_\Z(M)=\Hom_{\Z}(M,\Z)$ and the double duality map $M\to I_\Z(I_\Z(M))$ is again an isomorphism. Hence $\Z$ is a dualizing $\Z$-module.
\end{example}

A dualizing $R$-module in the non-derived setting need not exist; for example the category of $\Z$-modules (i.e.~abelian groups) does not have a dualizing module. Nevertheless, its derived category does has a dualizing module: $\Z$ itself serves that role (as in~\Cref{exam:dualz}). This is a significant example of duality in algebra as $\Z$ is an initial object in rings, and dualities for (derived) categories of $R$-modules can be obtained by studying the map $u:\Z\to R$ (or rather, the induced map on the corresponding categories.)

To be more precise, consider the forgetful functor $u_*$ between the derived categories of $R$-modules and $\Z$-modules. It always has a left adjoint $u^*$ given by tensoring up with $R$. However, suppose $u_*$ also has a right adjoint $u^!$. Then for any abelian group $A$ and $R$-module $M$, there is an isomorphism
\[\Ext_\Z(u_*M,A)\cong \Ext_R(M,u^!A); \]
which immediately gives that $u^!\Z$ is a good candidate for a dualizing $R$-module. Indeed, we have that 
\[I_{u^!\Z} (I_{u^! \Z} M) = \Ext_R( \Ext_R(M, u^!\Z ), u^!\Z ) \cong \Ext_\Z( \Ext_\Z(M, \Z ),\Z ) \cong M.\]

In the above discussion, there was no need to restrict ourselves to modules over commutative rings. Indeed, considering quasi-coherent modules over the structure sheaf of a scheme gave rise to Grothendieck-Serre duality; in the algebro-geometric world, a dualizing module is defined precisely as in the algebraic Definition \ref{def:dualizingalg}. Moreover, given a map of schemes $f:X\to Y$, such that $Y$ has a dualizing module $D_Y$ and the pushforward functor $f_*$ is faithful and has a right adjoint $f^!$, we get that $f^!D_Y$ is a dualizing module over $X$. For details, the reader is referred to \cite{hartshorne,neeman,fauskhumay}.

Dualizing objects can also be constructed in the category of ring spectra. Here the sphere is an initial object, and the category of $S$-modules is the category of spectra. (Note that this is already derived.) The following definition is due to Lurie~\cite{LurieDAGXIV}.
\begin{definition}
Let $A$ be a connective commutative ring spectrum, and let $K$ be an $A$-module (for example, as in~\cite{kriz2007rings}). Then $K$ is a dualizing $A$-module if
\begin{enumerate}%[(i)]
\item\label{item1} Each homotopy group $\pi_n K$ is a finitely generated module over $\pi_0 A$ and $\pi_n K$ vanishes for $n \gg 0.$ 
\item\label{item2}[Dualizing property] The canonical map $A \to F_A(K,K)$ is an equivalence.
\item\label{item3} The module $K$ has finite injective dimension: there is an integer $n$ such that if $M$ is an $A$-module with $\pi_i M = 0$ for $i > n$, then $\pi_iF_A(M,K) = 0$ for $i < 0$.
\end{enumerate}
\end{definition}
\begin{example}
We immediately see that the sphere is not a dualizing module over itself (in particular it does not satisfy the vanishing condition), but the Anderson spectrum $I_\Z$ plays this role. The proof is easy and can be found in~\cite[Example 4.3.9]{LurieDAGXIV}. Indeed, \eqref{item1} and \eqref{item3} are obvious from the definition, and \eqref{item2} follows from the fact that $\Z$ is the dualizing object in the derived category of $\Z$-modules. More precisely, duality in $\Z$-modules tells us that the homotopy groups $\pi_*F(I_\Z,I_\Z)\cong \Ext_\Z(\Ext_\Z(\pi_*S,\Z ))$ are isomorphic to $\pi_*S$ as a $\pi_*S$-module.
Similarly, the category of $p$-local spectra, i.e. modules over the $p$-local sphere spectrum, has a dualizing object which is given by the spectrum $I_{\Z_{( p)}}$, the homotopy fiber of the natural map $I_{\Q} \to I_{\Q/\Z_{( p)}}$.
\end{example}

\begin{rem}\label{rem:duals}
The above definition is suitable only for modules over \emph{connective} ring spectra, and therefore cannot be applied as is to $KU$ or $KO$-modules. However, if $R$ is \emph{any} ring spectrum, we can study dualizing $R$-modules relative to the unit map $u: S\to R$. Namely, the forgetful functor $u_*$ from $R$-modules to spectra has both a left and a right adjoint, the right being given by taking the function spectrum $F(R,-)$. Thus by the above formal reasoning, $u^! I_\Z = F(R,I_\Z)$ will have the dualizing property for $R$-modules.	
\end{rem}

\subsection{Homotopical duality for stacks}

We would like to patch together the geometric and homotopical notions of duality, thus we need a notion of a derived scheme which is a good notion of a locally ringed-by-spectra space. In fact, we shall not restrict ourselves to schemes, but consider (very simple) stacks, as we would like to have an object $BG$ for a finite group $G$. For our purposes, the following definition (which is in fact a theorem of Lurie's) will suffice.
\begin{definition}
A derived stack is an ordinary stack $(\mathcal{X},\mathcal{O}_0)$ equipped with a sheaf of commutative ring spectra $\OX$ on its small \'etale site such that
\begin{enumerate}%[(i)]
\item The pair $(\mathcal{X},\pi_0 \OX)$ is equivalent to $(\mathcal{X},\mathcal{O}_0)$, and
\item $\pi_k \OX$ is a quasi-coherent sheaf of $\pi_0 \OX$-modules.
\end{enumerate}
Here $\pi_k\OX$ is the sheafification of the presheaf $U\mapsto \pi_k(\OX(U))$. In this case say that $(\mathcal{X},\OX)$ or, abusively, $\mathcal{X}$, is a derived stack.
\end{definition}

\begin{example}
A commutative ring spectrum $R$ defines a derived stack $\Spec R$ whose underlying ordinary ``stack" is the affine scheme $\Spec \pi_0 R$. In particular, the sphere spectrum $\Spec S$ derives $\Spec \Z$, as does the Eilenberg-MacLane spectrum $H\Z$. In fact, $\Spec H\Z$ is the usual $\Spec \Z$, but it is no longer terminal; now $\Spec S$ is the terminal object.
\end{example}

By an $\OX$-module we will mean a quasi-coherent sheaf $\mathcal{F}$ of $\OX$-module spectra, i.e. a sheaf of $\OX$-modules such that for any diagram
\[ \xymatrix{
U \ar[rr]\ar[rd] &&V\ar[ld]\\
&\mathcal{X},
}\]
in which $U$ and $V$ are derived affine schemes \'etale over $\mathcal{X}$, we get an equivalence
\[ \mathcal{F}(V) \underset{\OX(V)}{\wedge} \OX(U) \xrightarrow{\sim} \mathcal{F}(U). \]

We will consider the following situation. Let $(\mathcal{X},\OX)$ be a derived stack over the sphere spectrum $S$ and let 
\[f:\mathcal{X}\to \Spec S\]
be its structure map. 
The push-forward or global sections functor $f_*: (\OX\text{-mod}) \to (S\text{-mod})$ always has a left adjoint, $f^*$, the constant sheaf functor. However, in some situations, it also has a right adjoint, usually denoted $f^!$. When such $f^!$ exists, $f^!\IZ$ is a perfect candidate for a dualizing module for $\OX$-modules.\footnote{Compare~\Cref{rem:duals}}

\begin{example}
Let $R$ be a commutative ring spectrum, and let $u:\Spec R\to \Spec S$ be the structure map. Then $u_*$, which is the forgetful functor from $R$-modules to spectra has both a left adjoint $u^* = - \wedge R$ and a right adjoint $u^! = F(R, -)$. Hence $u^! I_\Z = F(R,I_\Z) $ has the dualizing property for $R$-modules.
\end{example}

\begin{example}
Let $R$ be a commutative ring spectrum with an action by a finite group $G$. Then $(BG,R)$ is an example of a derived stack, whereby modules over the structure sheaf are $R$-modules with a compatible $G$-action. If $M$ is such a module, then $f_* M = M^{hG}$ has a left adjoint given by smashing with $R$. However, $f_*$ may or may not have a right adjoint for non-trivial $G$. The existence of such is related to the Tate spectra that we recall in the next section. 
Note that in a $K(n)$-local setting the right adjoint of such an $f_*$ always exists by a theorem of Greenlees-Sadofsky \cite{GreenleesSadofsky} recently generalized by Hopkins-Lurie \cite{HopkinsLurie}.
\end{example}

\subsection{Duality for $K$-theory, geometrically}

We now describe how $K$-theory fits into this perspective; to begin with, we recall the setup from \cite[Appendix A]{lawson2012strictly}. The reader is referred to loc.cit. for more details and proofs.

Let $\mathcal{M}_{\G_m}/\operatorname{Spec}{\Z}$ denote the stack classifying group schemes which become isomorphic to $\G_m$ after a faithfully flat extension. These are also called forms of the multiplicative group. Since the automorphism group of $\G_m$ over $\Z$ is $C_2$, we get that $\mathcal{M}_{\G_m}$ is equivalent to the stack $BC_2$ classifying principal $C_2$-torsors; explicit details can be found in loc.cit.

There exists a sheaf $\mathcal{O}^\text{top}$ of commutative ring spectra on the stack $\mathcal{M}_{\G_m}$ such that to each \'etale map $\Spec R \to \mathcal{M}_{\G_m}$, the sheaf $\mathcal{O}^\text{top}$ assigns a complex orientable weakly even periodic commutative ring spectrum $\mathcal{O}^\text{top}( R)$ such that 
\begin{enumerate}[(a)]
\item $\pi_0 \mathcal{O}^\text{top} ( R)$=R, and
\item the formal completion of the form of $\G_m$ classified by $\Spec R \to \mathcal{M}_{\G_m}$ is isomorphic to the formal group law of $\mathcal{O}^\text{top}( R)$.
\end{enumerate}
 Lawson-Nauman show that the spectrally ringed stack $(\mathcal{M}_{\G_m}, \mathcal{O}^{\text{top}})$ (over the sphere spectrum $\Spec S$) is equivalent to $(BC_2,KU)$, where $C_2$ acts on $KU$ (by commutative ring maps) via complex conjugation. In particular, the category of $\mathcal{O}^{\text{top}} $-modules is equivalent to the category of $C_2$-equivariant $KU$ modules.

More specifically, let $\Spec R \to \mathcal{M}_{\G_m}$ be an \'etale map; and let $\tilde R$ denote the ring which fits in a pullback square
\[\xymatrix{ 
\Spec \tilde R \ar[r] \ar[d] & \Spec R \ar[d]\\
\Spec \Z \ar[r] & \mathcal{M}_{\G_m} \cong BC_2,
} \]
where the bottom map is the usual covering map of $BC_2$ (which, in particular, is an affine map). The ring $\tilde R $ constructed in this way has a free $C_2$-action and as such has a homotopically unique $C_2$-equivariant realization $S(\tilde R) $ according to \cite{BakerRichter}, with
\[\pi_* S(\tilde R) \cong \pi_* S\otimes_{\Z} \tilde R.\]
The value that the sheaf $\mathcal{O}^{\text{top}}$ takes on the affine $\Spec R \to \mathcal{M}_{\G_m}$ is then given by $(KU\wedge S(\tilde R))^{hC_2}$, where $C_2$ acts diagonally on the smash product. We have 
\[\pi_* \mathcal{O}^{top}( R) = H^0(C_2, \pi_*KU \otimes \tilde R) \]

For example, if $R=\Z$ and the map is the covering $\Spec \Z \to \mathcal{M}_{\G_m}$ (i.e. the map classifying $\G_m$ itself), $\tilde R \cong \Z[C_2] $, and $\mathcal{O}^{top}(\Z) = (KU[C_2])^{hC_2} \simeq KU$.

 Thus the global sections of $\mathcal{O}^{\text{top}}$ over $\mathcal{M}_{\G_m}$ are the homotopy fixed points $KU^{hC_2}$, i.e. the real $K$-theory spectrum $KO$. In fact, using Galois descent or the general affineness machinery of Mathew-Meier \cite[Ex.~6.1]{MeierMatthew}, one shows that the category of $\mathcal{O}^{\text{top}}$-modules, which we saw is equivalent to $C_2\text{-}KU$-modules, is further equivalent to the category of $KO$ modules as, by loc.cit., the derived stack $(BC_2,KU)$ is equivalent to $\Spec KO$. Note that in the category of $C_2\text{-}KU$-modules, the internal $\Hom$ is given by the function spectrum of non-equivariant $KU$-module maps equipped with the $C_2$-action by conjugation; we will denote this function spectrum by $F_{C_2\text{-}KU}$.

 \begin{theorem}\label{thm:ushriek}
 The following three equivalent statements are true.
 \begin{enumerate}[(i)]
 \item Let $u$ be the structure map $ \Spec KO \to \Spec S$. The push forward (i.e. forgetful) map 
 \[u_*:(KO\text{-mod})\to (S\text{-mod})\] has a right adjoint $u^!=F(KO,-)$ such that for a spectrum $A$
 \[ u^!A =F(KO, A) \simeq F ( I_\Z A, \Sigma^4 KO).\]
 In particular, $u^! I_\Z \simeq \Sigma^4 KO $ is the relative dualizing $KO$-module.
 
\item\label{part:thm:g} The push forward $g_*$ along the structure map $g:(BC_2,KU)\to \Spec S$ has a right adjoint $g^!$ given by
 \[ g^! A = F_{C_2\text{-}KU} (g^* (I_\Z A), \Sigma^4 KU ). \]
 
\item The push forward $f_*$ along $(\mathcal{M}_{\G_m},\mathcal{O}^{\text{top}}) \to \Spec S$ has a right adjoint $f^!$ such that
\[ f^! A = \Hom{ }_{\mathcal{O}^{\text{top}}} (f^*(I_\Z A),\Sigma^4 \mathcal{O}^{\text{top}} ).\]
 In particular, $f^! I_\Z \simeq \Sigma^4\mathcal{O}^{\text{top}} $ is the relative dualizing $\mathcal{O}^{\text{top}}$-module.
 \end{enumerate}
\end{theorem}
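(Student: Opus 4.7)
The plan is to establish that the three statements (i), (ii), (iii) are equivalent translations of the single equivalence of $KO$-modules $I_\Z KO \simeq \Sigma^4 KO$ under the chain of equivalences of $\infty$-categories
\[ KO\text{-mod} \simeq C_2\text{-}KU\text{-mod} \simeq \mathcal{O}^{\text{top}}\text{-mod}, \]
the first coming from the affineness machinery of Mathew--Meier and the second from the Lawson--Naumann identification $(\mathcal{M}_{\G_m},\mathcal{O}^{\text{top}})\simeq (BC_2,KU)$ recalled above. Under these equivalences the structure maps $u$, $g$, $f$ correspond and one checks that $g^*(I_\Z A)\simeq KU\wedge I_\Z A$ (with the $C_2$-action on the $KU$ factor only), so that taking homotopy fixed points of (ii) gives $F_{C_2\text{-}KU}(g^*(I_\Z A),\Sigma^4 KU)^{hC_2}\simeq F(I_\Z A,\Sigma^4 KO)$, matching (i); the analogous matching holds for (iii). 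So it suffices to prove (i).

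The existence of the right adjoint $u^!=F(KO,-)$ to $u_*:(KO\text{-mod})\to(S\text{-mod})$ is forced by the smash-function adjunction and requires no work. The substantive content of the theorem is the equivalence
\[ u^! I_\Z \;=\; F(KO,I_\Z)\;=\;I_\Z KO \;\simeq\; \Sigma^4 KO \]
of $KO$-modules. To deduce it I would invoke \Cref{thm:Andersondual}, which supplies a $C_2$-equivariant equivalence of $KU$-modules $I_\Z KU \simeq \Sigma^4 KU$, together with the Tate vanishing established in \Cref{sec:tate}, which identifies $KO \simeq KU^{hC_2}$ with the homotopy orbits $KU_{hC_2}$. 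Combining them gives
\[ I_\Z KO \;=\; F(KU_{hC_2},I_\Z)\;\simeq\; F(KU,I_\Z)^{hC_2} \;=\; (I_\Z KU)^{hC_2}\;\simeq\; (\Sigma^4 KU)^{hC_2}\;=\;\Sigma^4 KO, \]
where it is crucial that $KO$ also equals the homotopy \emph{orbits}, since this is what converts $F(-,I_\Z)$ of orbits into fixed points; without Tate vanishing one would only obtain a map, not an equivalence.

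The second displayed formula $F(KO,A)\simeq F(I_\Z A,\Sigma^4 KO)$ is then a routine consequence of $\Sigma^4 KO\simeq I_\Z KO$ and the smash-function adjunction:
\[ F(I_\Z A,\Sigma^4 KO)\;\simeq\; F(I_\Z A, F(KO,I_\Z))\;\simeq\; F(KO, F(I_\Z A, I_\Z)) \;=\; F(KO, I_\Z I_\Z A), \]
which agrees with $F(KO,A)$ via the Anderson double-duality map $A\to I_\Z I_\Z A$ on a suitable (e.g.\ Anderson-reflexive) class of spectra. The principal obstacle in the whole argument is therefore the descent step for $I_\Z KO$: one has to upgrade the elementary Anderson self-duality $I_\Z KU\simeq KU$ of \Cref{prop:moduleeq} to a $C_2$-equivariant $KU$-module equivalence with a degree shift of $4$, and then track that equivalence across the homotopy-orbit-to-homotopy-fixed-point identification. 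That is precisely the content of \Cref{thm:Andersondual} together with the Tate vanishing of \Cref{sec:tate}, both of which rest on the equivariant homotopy theory developed in the preceding sections.
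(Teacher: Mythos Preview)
Your proposal is correct and follows essentially the same route as the paper: both reduce to one of the three equivalent formulations, invoke \Cref{thm:Andersondual} together with Tate vanishing (equivalently \Cref{cor:FixedHtpyFixed}) for the case $A=I_\Z$, and then extend to general $A$ via Anderson double duality. The only organizational difference is that the paper chooses to verify formulation~(ii) directly by checking the adjunction identity $F(g_*M,A)\simeq g_*F_{C_2\text{-}KU}(M,g^!A)$ for an arbitrary $C_2$-$KU$ module $M$, whereas you work in formulation~(i) where the right adjoint $u^! = F(KO,-)$ is already known and only needs to be identified; your caveat that double duality $A\to I_\Z I_\Z A$ requires a reflexivity hypothesis is in fact more careful than the paper, which asserts it without qualification.
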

\begin{rem}
The proof of this theorem depends on~\cref{thm:Andersondual}, which is the main result of this paper and to whose proof~\cref{sec:norm} through~\cref{sec:KOanderson} are devoted. Thus the logical placement of the proof of this theorem is at the end of~\cref{sec:KOanderson}, but we have decided to include it here for better readability. The reader is advised to come back to the proof after consulting the statements of~\cref{thm:Andersondual} and~\cref{cor:FixedHtpyFixed}.
\end{rem}

\begin{proof}
That the statements are equivalent is immediate from the above discussion. We will 
prove~\cref{part:thm:g}. What we need to show is that for a spectrum $A$ and a $C_2\text{-}KU$ module M, there is a natural equivalence
\begin{align}\label{eq:adjunction}
 g_* F_{C_2\text{-}KU}(M, F_{C_2\text{-}KU} (g^* (I_\Z A), \Sigma^4 KU )) \simeq F(g_* M, A). 
 \end{align}
We split the proof in two parts. 
\begin{enumerate}[(a)]
\item When $A$ is the Anderson spectrum $I_\Z$, \eqref{eq:adjunction} reduces to showing that 
\[ F_{KU}(M,\Sigma^4 KU)^{hC_2} \simeq F(M^{hC_2}, I_\Z ).\]
However, $\Sigma^4 KU$ is equivariantly equivalent to $I_\Z KU$ by~\Cref{thm:Andersondual}, and $M^{hC_2}$ is equivalent to $M_{hC_2}$  by~\Cref{cor:FixedHtpyFixed}. The latter gives an isomorphism $F(M^{hC_2},I_\Z) \simeq F(M_{hC_2},I_\Z) \simeq F(M,I_\Z)^{hC_2}$  which in turn implies that we have a chain of equivalences
\[ F_{KU}(M,\Sigma^4 KU)^{hC_2} \simeq F_{KU}(M,F(KU,I_\Z) )^{hC_2} \simeq F(M,I_\Z)^{hC_2} \simeq F(M^{hC_2}, I_\Z ).\]
\item For a general $A$, we note that the map $A \to F( F(A,I_\Z),I_\Z)$ is an equivalence, so we substitute $A$ by its double dual. For a $C_2\text{-}KU$ module $M$, since $M_{hC_2}\simeq M^{hC_2} $ (by~\Cref{cor:FixedHtpyFixed}) we have that
\[ M^{hC_2} \wedge F(A,I_\Z) \simeq (M\wedge F(A,I_\Z))^{hC_2},\]
implying
\begin{align*}
 F(M^{hC_2}, A) & \simeq   F\big(M^{hC_2}, F(F(A,I_\Z),I_\Z) \big)  \simeq F(M^{hC_2}\wedge F(A,I_\Z),I_\Z)\\
 & \simeq F\big( (M\wedge F(A,I_\Z))^{hC_2},I_\Z \big). %\simeq \big( F(M, F(I_\Z A, I_\Z) \big)^{hC_2}. 
 \end{align*}
By part (a), $ F\big( (M\wedge F(A,I_\Z))^{hC_2},I_\Z \big) $ is equivalent to 
\[ F_{KU} \big( M\wedge F(A,I_\Z), \Sigma^4 KU \big)^{hC_2}, \]
proving our result.
\end{enumerate}
\end{proof}

\section{The norm cofibration sequence}\label{sec:norm}
Suppose a finite group $G$ acts on a module $M$. The map on $M$ given by \[m \mapsto \sum_{g\in G} gm\]
factors through the orbits $M_G$ and lands in the invariants $M^G$, giving the norm $N_G:M_G \to M^G$.
The Tate cohomology groups are then defined as
\begin{equation}\label{eq:tateDef}
\hat{H}^n(G;M) = 
\begin{cases}
	H^n(G;M) &\text{ for } n \ge 1 \\
	\operatorname{coker}(N_G) &\text{ for } n = 0 \\
	\text{ker} (N_G) &\text{ for } n = -1 \\
	H_{-n-1}(G;M) &\text{ for } n \le -2.
\end{cases}
\end{equation}
	
In the same vein, Greenlees and May~\cite{greenlees1995generalized} have assigned a Tate spectrum to a $G$-spectrum $X$. The story starts with the cofiber sequence
\[ EG_+ \to S^0 \to \tilde EG, \]
which gives rise to the following commutative diagram of equivariant maps 
\[\xymatrix{ EG_+ \wedge X \ar[r] \ar[d] & X \ar[r] \ar[d] & \tilde EG \wedge X \ar[d] \\
EG_+ \wedge F(EG_+,X) \ar[r] & F(EG_+,X) \ar[r] & \tilde EG \wedge F(EG_+,X),
} \]
in which the rows are cofiber sequences.
 The left vertical arrow is always a $G$-equivalence. Upon taking $G$-fixed points and using the Adams isomorphism \cite[XVI.5.4]{may1996equivariant}, the diagram becomes 
\begin{align}\label{eq:NormCofibration}
\xymatrix{ X_{hG} \ar[r] \ar@{=}[d] & X^G \ar[r] \ar[d] & \Phi^GX\ar[d] \\
X_{hG} \ar[r] & X^{hG} \ar[r] & X^{tG},
} \end{align}
where $\Phi^G X$, which we define as $(\tilde EG \wedge X)^G$, is the spectrum of geometric fixed points, and $X^{tG}=( \tilde EG \wedge F(EG_+,X))^G$ is the Tate spectrum of $X$.

The map $X_{hG} \to X^{hG}$ is often called the ``norm map", and can be thought of as the spectrum level version of the algebraic norm map considered above. Thus the Tate spectrum is the cofiber of the norm map.

Associated to the homotopy orbit, homotopy fixed point, and Tate spectrum are spectral sequences
\begin{equation}\label{ss:GreenleesMay}
\begin{aligned}
	H_s(G,\pi_t(X)) \Rightarrow \pi_{t-s} X_{hG} \\
	H^s(G,\pi_t(X)) \Rightarrow \pi_{t-s} X^{hG} \\
	\hat H^s(G,\pi_t(X)) \Rightarrow \pi_{t-s} X^{tG} .
\end{aligned}
\end{equation}

If $R$ is a ring spectrum, then so are $R^{hG}$ and $R^{tG}$; moreover, the homotopy fixed point and Tate spectral sequences are spectral sequences of differential algebras and there is a natural map between them that is compatible with the differential algebra structure. 

From the definition it is clear that $X^{tG}$ is contractible if and only if the norm map is an equivalence $X_{hG} \simeq X^{hG}$ between homotopy orbits and homotopy fixed points. We will show that this is the case for $X=KU$ with the $C_2$-action by complex conjugation.

\section{The homotopy fixed point spectral sequence for $KO\simeq KU^{hC_2}$}\label{sec:HFPSS}

In the equivalence $KO = KU^{hC_2}$, $C_2$ acts on $KU$ by complex conjugation. This action has been made ``genuinely" equivariant by Atiyah \cite{atiyah1966k}, who constructed a $C_2$-equivariant version of $KU $ often denoted by $K\R$ indexed by representation spheres. In particular, the fixed points $K\R^{C_2}$ are equivalent to the homotopy fixed points $KU^{hC_2}=KO$. In view of this, we will abandon the notation $K\R$ and refer to this ``genuine" $C_2$-spectrum by the name $KU$ as well. 

The $E_2$-term of the homotopy fixed point spectral sequence for $KU^{hC_2}$ is given by $H^*(C_2;KU_*)$, with the generator $c$ acting on $KU_*=\Z[u^{\pm 1}]$ via $c(u)= -u$. The standard projective resolution 
\[ \cdots \xrightarrow{1-c} \Z[C_2]\xrightarrow{1+c} \Z[C_2]\xrightarrow{1-c} \Z \]
of $\Z$ immediately gives that the $E_2$-term of the HFPSS can be described as 
\[
E_2^{s,t} = \Z[\eta,t^{\pm 1}] /(2\eta),
\]
with $|\eta| = (1,1)$ and $|t| = (4,0)$, where $t = u^2$ is the square of the Bott class and $\eta$ is the class that ends up representing the Hopf element $S^1\to KO$.

It has become somewhat standard to compute the differentials in the HFPSS as follows. Via Bott periodicity we know that $KO$ is an 8 periodic ring spectrum, hence, an analysis of the spectral sequence forces there to be a differential $d_3(t)=\eta^3$, and the algebra structure determines the rest of the differentials.
Even though this is a famous differential, we include here a proof from scratch that we learned from Mike Hopkins and Mike Hill and which only uses the fact that $KO\simeq KU^{hC_2} \simeq KU^{C_2}$, as well as the equivariant form of Bott periodicity. The importance of the argument lies in its potential to be generalized to higher chromatic analogues of $KO$, such as $TMF$ or $EO_n$'s, where less is known a priori.

The HFPSS is the Bousfield-Kan spectral sequence for the cosimplicial spectrum $ F((EC_2)_+, KU)$, i.e. 
\[\xymatrix{ KU  \ar@<0.5ex>[r]^-c\ar@<-0.5ex>[r]_-1 & F((C_2)_+, KU)  \ar@<0ex>[r] \ar@<1ex>[r]\ar@<-1ex>[r] & F((C_2)^2_+, KU) \ar@<0.5ex>[r]\ar@<-0.5ex>[r] \ar@<1.5ex>[r]\ar@<-1.5ex>[r] &\cdots }\]
Its associated normalized complex (or $E_1$ page of the spectral sequence) is 
\[\xymatrix{KU \ar[r]^{1-c} & KU \ar[r]^{1+c} & KU \ar[r]^{1-c} &\cdots   }\]

Let $\rho$ denote the regular complex representation of $C_2$; then $\rho=1\oplus \sigma$, with $\sigma$ the sign representation. The one-point compactification $S^{\rho}$ of $\rho$ is $C_2$-equivalent to $\C P^1$. As such, it gives a $C_2$-map $S^{\rho} \hookrightarrow BU(1) \to BU \to KU$ which we will denote by $v_1$. This $v_1$ defines an element in $\pi_{\rho} KU $ which corresponds to a $KU$-module map
\[S^{\rho }\wedge KU \to KU \]
which is the $C_2$-equivalence known as Bott periodicity~\cite[XIV.3.2]{may1996equivariant}.

An equivalent way to construct $v_1$ is by observing that the composition $S^1 \xrightarrow{\eta} KO \to KU$ is null and $C_2$-equivariant (where, of course, $S^1 $ and $KO$ have trivial actions). Thus it extends over the interior $D^2$ of $S^1$ in two ways; if $\tau:D^2\to KU$ is one of them, then $c \circ \tau:D^2\to KU$ is the other. By construction, $\tau$ and $c\circ \tau$ agree on the boundary $S^1$ of $D^2$, hence can be glued to give a map $S^{\rho} \to KU$ which is precisely $v_1$, as $cu=-u$.

Our goal is to show that in the HFPSS for $KU$, $d_3 (v_1^2)=\eta^3$, and our method is to look at the map of HFPSSs induced by $v_1^2:S^{2\rho} \to KU$. This will be fruitful because the HFPSS for $S^{2\rho}$ is determined by the cell decomposition of $(S^{2\rho})^{hC_2}$.

For any integer $n$, we have that the homotopy orbits $(S^{-n\sigma})_{hC_2}$ are equivalent to the Thom spectrum $(\R P^\infty)^{-n\xi}$, where $\xi$ is the tautological line bundle of $\R P^\infty$. This Thom spectrum is briefly denoted $\R P^\infty_{-n}$, is called a stunted projective space, and has been extensively studied. In particular, its cell structure is well known. The schematic of the cell structure for $n=2$ is as follows
\begin{align}\label{eq:celldiagram1}
\xymatrix{ 
\underset{-2}{\bullet} \ar@{-}@/^1pc/[rr]& \underset{-1}\bullet  \ar@{-}@/_1pc/[rr] \ar@{-}[r] & \underset{0}\bullet  &\underset{1}\bullet \ar@{-}[r]  & \underset{2}\bullet  \ar@{-}@/^1pc/[rr]  & \ar@{-}[r] \underset{3}\bullet  \ar@{-}@/_1pc/[rr] & \underset{4}\bullet &\ar@{-}[r]  \underset{5}\bullet & \underset{6}\bullet \cdots
}
\end{align}
where the bullets denote cells in dimension given by the corresponding number, straight short lines between two bullets indicate that the cells are attached via multiplication by $2$, and the curved lines indicate attaching maps $\eta$. The pattern is four-periodic.

Let $DX$ denote the Spanier-Whitehead dual of $X$. We have a chain of equivalences
\[(S^{2\rho})^{hC_2}  \simeq  (D S^{-2\rho})^{hC_2} \simeq D((S^{-2\rho})_{hC_2}) \simeq D(\Sigma^{-2} (S^{-2\sigma})_{hC_2} ) \simeq \Sigma^2 D(\R P^\infty_{-2}). \]
Hence the cell diagram of $(S^{2\rho})^{hC_2} $ is
\begin{align}
\xymatrix{
\cdots \underset{-4}\bullet \ar@{-}[r]  & \underset{-3}\bullet  \ar@{-}@/_1pc/[rr]  & \underset{-2}\bullet \ar@{-}[r] \ar@{-}@/^1pc/[rr] & \underset{-1}\bullet &\underset{0}\bullet \ar@{-}[r] &\underset{1}\bullet  \ar@{-}@/_1pc/[rr]  &\underset{2}\bullet \ar@{-}[r] \ar@{-}@/^1pc/[rr] &\underset{3}\bullet &\underset{4}\bullet 
}
\end{align}
i.e. it is the diagram in~\eqref{eq:celldiagram1} flipped and shifted by $2$.

This determines the $E_1$-page of the HFPSS for $S^{2\rho}$ and all differentials in HFPSS for $KU$. Let us describe how that works.~\Cref{fig:HFPSS1} shows the relevant parts of the $E_1$-terms for the HFPSS for $S^{2 \rho}$ and $KU$, the first following from the cell decomposition above. In particular, the $d_1$ differential is non-zero where shown in which case it is simply given by multiplication by 2. The map $v_1^2$ relates the spectral sequences, and in particular, as noted, the generator labelled $e_4$ maps to the generator labeled $t:=v_1^2$.

It is a simple matter now to determine the relevant parts of the $E_2$-page. Note that there is a possible $d_2$ differential on the $S^{2 \rho}$ spectral sequence, whilst sparsity precludes such a differential on $KU^{hC_2}$. In fact, the attaching map $\eta$ in the cell decomposition above determines the $d_2$ differential $d_2(e_4) = \eta h_1^2$ on the $S^{2 \rho}$ spectral sequence, as shown in~\Cref{fig:HFPSS2}.

Under the map of spectral sequences induced by $v_1^2$, the classes $e_4$ and $h_1^2$ map to the classes $v_1^2=t$ and $\eta^2$, respectively. Moreover, multiplication by $\eta$ remains multiplication by $\eta$ under $v_1^2$, but it is a map of cohomological degree $1$ in the HFPSS for $KU$. In particular, we get the differential $d_3(v_1^2) = \eta^3$ as expected; multiplicativity now gives that
\[ d_3(v_1^{2+4k} \eta^m) = v_1^{4k}\eta^m d_3(v_1^2) = v_1^{4k} \eta^{m+3}, \] 
i.e. the usual differential pattern depicted in~\Cref{fig:e3ss}.

\begin{figure}[tbh]
\centering
\includegraphics[scale=0.93]{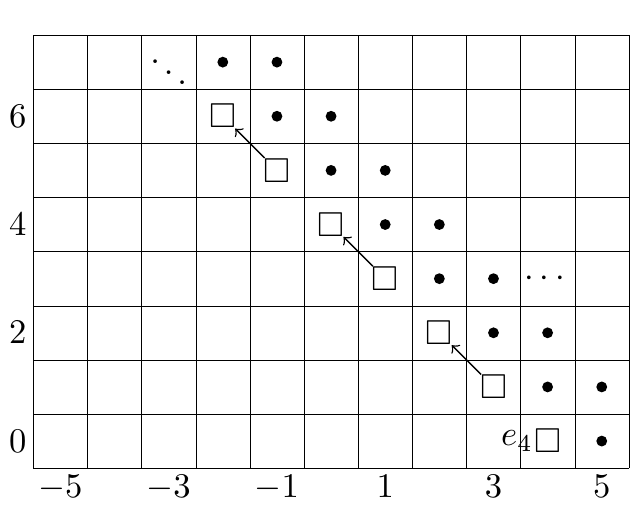}
\includegraphics[scale=0.93]{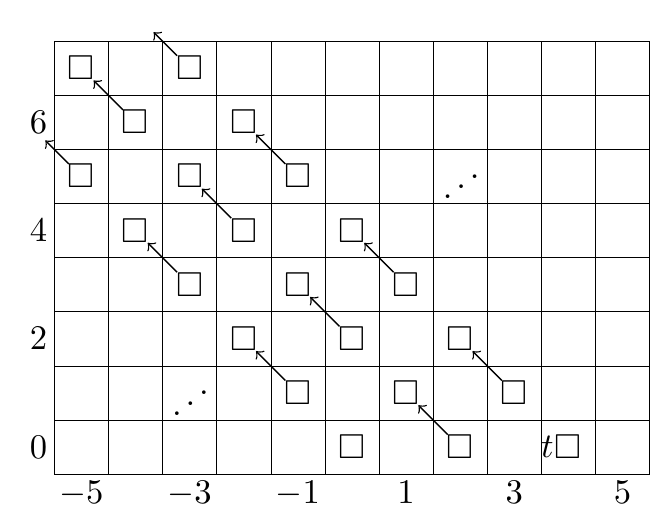}
\caption{The $E_1$-pages for the HFPSS for $S^{2 \rho}$ and $KU$. By equivariant Bott periodicity, $v_1^2$ induces an isomorphism in degree $(4,0)$.}\label{fig:HFPSS1}
\end{figure}

\begin{figure}[tbh]
\centering
\includegraphics[scale=0.93]{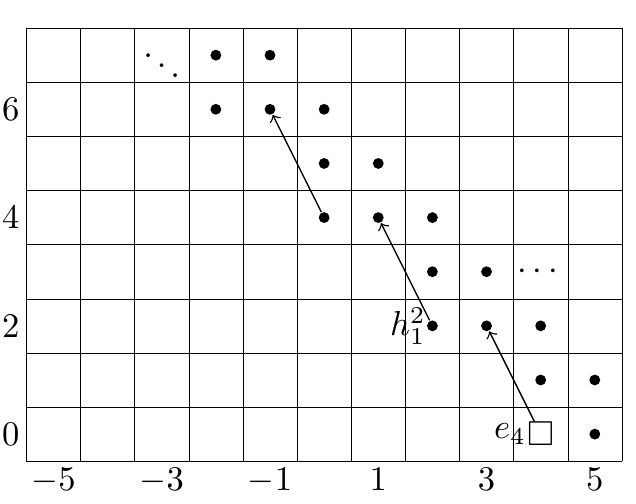}
\includegraphics[scale=0.93]{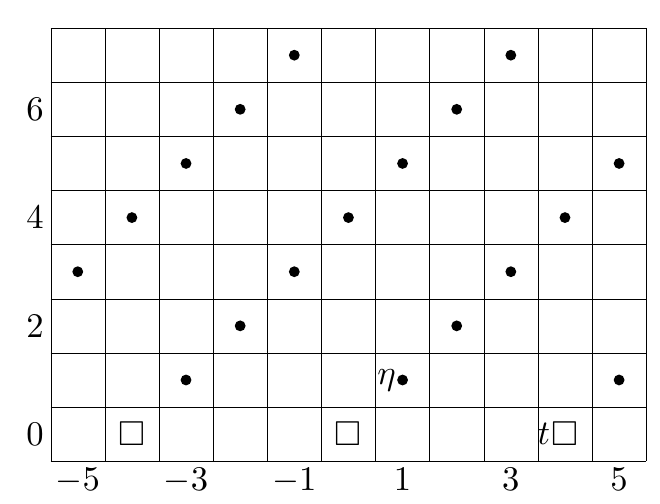}
\caption{The $E_2$-pages for the HFPSS for $S^{2 \rho}$ and $KU$. Note that there is a $d_2$-differential on the $E_2$-page for $S^{2 \rho}$ but there is no room for a $d_2$ differential for $KU$.}\label{fig:HFPSS2}
\end{figure}

\begin{figure}[tbh]
\centering
\includegraphics{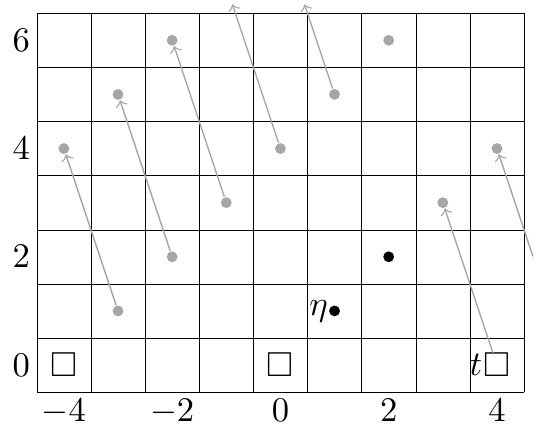}
\caption{The $E_3$-page for the HFPSS for $KU$.}\label{fig:e3ss}
\end{figure}

\section{The Tate spectrum and the Euler class}\label{sec:tate}

In addition to the spectral sequences \eqref{ss:GreenleesMay}, there is a conceptual way to compute the homotopy of a Tate spectrum from that of the homotopy fixed points and it involves a (co)homology class called the Euler class, described below. The calculations in this section bear some similarity to those of Fajstrup~\cite{tatefajstrup} who calculates that the generalized Tate spectrum associated to $K\R$ is equivariantly contractible. The key to both calculations is the fact that the generator $\eta \in \pi_1 KO$ is nilpotent.

We now define the Euler class. Think of the sign representation $\sigma$ as a $C_2$-bundle over a point. Then the Euler class is a characteristic class \[e_\sigma \in H^1_{C_2}(\ast) = H^1(\R P^\infty) \simeq \Z/2.\]   Alternatively, the Euler class $e_{\sigma} $ is the element of 
\[\pi_{-\sigma}^{C_2} KU = [S^0, \Sigma^{\sigma} KU]^{C_2}\cong [S^1,\Sigma^{\rho}KU]^{C_2} \cong [S^1,KU]^{C_2} \cong \pi_1 KO=\Z/2,\]
which is defined as the $C_2$-fixed points of composition of the inclusion of fixed points $S^0 \to S^{\sigma}$ with the $\sigma$-suspension of the unit map $S^0\to KU$. Note that in the chain of equivalences above, we have used the equivariant form of Bott periodicity. We claim that $e_{\sigma}$ is the non-trivial element of $\pi_{-\sigma}^{C_2} KU$; indeed, the inclusion $S^0\to S^{\sigma}$ is (unstably) the identity on $C_2$-fixed points, and the fixed points of the unit map $S^0\to KU $ contain the unit map for $KU^{C_2}\simeq KO$, which is non-trivial.  

Thus we conclude that $e_{\sigma} = \eta \in \pi_1 KO$.
\begin{prop}\label{thm:tatevanish}
The Tate spectrum $KU^{tC_2}$ as well as the geometric fixed point spectrum $\Phi^{C_2} KU$ are both contractible.
\end{prop}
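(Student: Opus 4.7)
The plan is to leverage the identification $e_\sigma = \eta$ established just above the statement, together with the computation in~\Cref{sec:HFPSS} that $\eta$ is nilpotent in $\pi_\ast KO$. The underlying strategy: both $\Phi^{C_2}KU$ and $KU^{tC_2}$ arise from smashing with $\tilde{E}C_2$, which has the effect of inverting the Euler class $e_\sigma$; but inverting a nilpotent element produces the zero spectrum.

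First I would make precise the nilpotence claim. From the HFPSS computation in~\Cref{sec:HFPSS}, the differential $d_3(t) = \eta^3$ kills $\eta^3$ on $E_4 = E_\infty$, so $\eta^3 = 0$ in $\pi_3 KO$; equivalently, $\pi_3 KO$ vanishes. Hence $\eta \in \pi_1 KO$ is nilpotent. This is the main input from the previous section.

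Next I would treat the Tate spectrum. Using the natural map of multiplicative spectral sequences from the HFPSS to the Tate spectral sequence $\hat{H}^s(C_2;\pi_t KU) \Rightarrow \pi_{t-s}KU^{tC_2}$, the differential $d_3(t) = \eta^3$ persists in the Tate spectral sequence. The key new feature of the Tate $E_2$-page is that $\eta$ is \emph{invertible}, because $\hat{H}^\ast(C_2;\Z)$ (with trivial or sign action shifted appropriately by powers of $u$) is periodic of period $2$ in the $\eta$-direction. Consequently $d_3(t\eta^{-3}) = 1$, so the unit dies on $E_4$. Since $KU^{tC_2}$ is a ring spectrum in which the unit is zero on the associated graded, we conclude $KU^{tC_2} \simeq \ast$.

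For the geometric fixed points I would appeal to the norm cofibration diagram~\eqref{eq:NormCofibration}. The middle vertical arrow $KU^{C_2} \to KU^{hC_2}$ is an equivalence by the Atiyah/Segal theorem recalled at the start of~\Cref{sec:HFPSS} (i.e.\ $K\R^{C_2} \simeq KU^{hC_2} = KO$), and we just showed the right-hand bottom entry $KU^{tC_2}$ is contractible. Comparing the two horizontal cofiber sequences then forces $\Phi^{C_2}KU \simeq \ast$ as well. The main obstacle is really just the first step — establishing compatibility of the $d_3$-differential across the HFPSS and Tate spectral sequences and observing the $\eta$-invertibility of the Tate $E_2$-term — after which everything else is formal.
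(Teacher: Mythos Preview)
Your argument is correct and close to the paper's. The paper's actual proof is even shorter than your opening paragraph: it simply cites the identification $\Phi^{C_2}KU \simeq KU^{C_2}[e_\sigma^{-1}]$ and $KU^{tC_2}\simeq KU^{hC_2}[e_\sigma^{-1}]$ from \cite[XVI.6.8]{may1996equivariant} and invokes the nilpotence of $e_\sigma=\eta$ in $KO$ to conclude both localizations vanish. Your detailed treatment of the Tate case via the spectral sequence is exactly the alternative ``direct computation'' the paper records immediately after the proposition (you should mention conditional convergence to justify passing from $E_4=0$ to contractibility). For the geometric fixed points you instead deduce vanishing from the norm diagram \eqref{eq:NormCofibration} together with $KU^{C_2}\simeq KU^{hC_2}$; the paper does not take this route, but it is a perfectly valid alternative that avoids appealing a second time to the Euler-class localization formula.
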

\begin{proof}
By~\cite[XVI.6.8]{may1996equivariant}, $\Phi^{C_2} KU \simeq KU^{C_2}[e_{\sigma}^{-1}] $ and $KU^{tC_2}\simeq KU^{hC_2}[e_{\sigma}^{-1}]$. But $e_{\sigma}=\eta$ is nilpotent in $KO\simeq KU^{C_2} \simeq KU^{hC_2}$, whence the claim.
\end{proof}

We can also show that the Tate spectrum $KU^{tC_2} $ vanishes by a direct computation. This will have the advantage of making the homotopy orbit spectral sequence easy to calculate. From the above discussion, the $E_2$-term of the Tate spectral sequence for $KU$ can be described as
\[
\hat E_2^{s,t} = E_2^{s,t}[\eta^{-1}]= \Z/2[\eta^{\pm 1},t^{\pm 1}].
\]
We proved above that $\eta^3=0$ in the homotopy of $KO$. Since the map between the homotopy fixed point and Tate spectral sequence is compatible with the differential algebra structure, we also have that $\eta^3=0$ in the Tate spectral sequence. 
Thus we must have $d_3(t) = \eta^3$ and the differentials then follow the pattern shown in~\Cref{fig:TateSS} because of multiplicativity. In particular the $E_4$-page is 0, hence the Tate spectrum is contractible as the Tate spectral sequence is conditionally convergent.

\begin{figure}[hbt]
\centering
\includegraphics{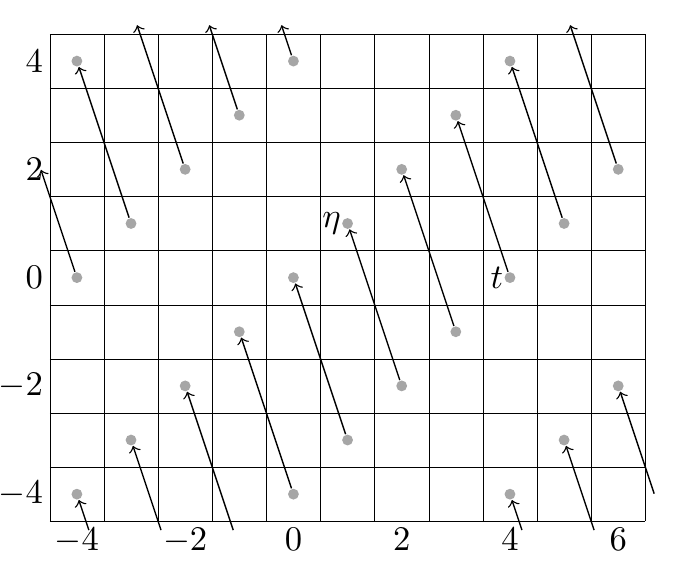}
\caption{Tate spectral sequence for $KU$}\label{fig:TateSS}
\end{figure}

The vanishing of the Tate spectrum allows us to prove the following two corollaries which we used in~\Cref{sec:andersonduality}. 
\begin{prop}
Let $M$ be a $C_2$-equivariant $KU$ module. Then $M^{tG}$ and $\Phi^G M$ are contractible.
\end{prop}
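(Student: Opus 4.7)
The plan is to leverage the ring-and-module structure together with the vanishing result \Cref{thm:tatevanish}. The key input is that both $(-)^{tC_2}$ and $\Phi^{C_2}$ are lax symmetric monoidal functors out of the category of $C_2$-spectra. Consequently, $KU^{tC_2}$ and $\Phi^{C_2}KU$ inherit (homotopy) ring spectrum structures from the equivariant ring structure on $KU$, and for any $C_2$-equivariant $KU$-module $M$, the spectrum $M^{tC_2}$ is a module over $KU^{tC_2}$ while $\Phi^{C_2}M$ is a module over $\Phi^{C_2}KU$.

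Given this setup, the argument is immediate: a module over a contractible ring spectrum is itself contractible, since the unit map factors through the zero object and the module is equivalent to $R\wedge_R M$ with $R\simeq\ast$. So I would first invoke \Cref{thm:tatevanish} to deduce $KU^{tC_2}\simeq\ast$ and $\Phi^{C_2}KU\simeq\ast$, and then conclude $M^{tC_2}\simeq\ast$ and $\Phi^{C_2}M\simeq\ast$ from the module structures.

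The main thing to be careful about is justifying the module structure. One can either cite the lax monoidality of $\Phi^{C_2}$ and $(-)^{tC_2}$ directly, or, more concretely, use the defining cofiber sequence $EC_{2+}\to S^0\to\widetilde{EC_2}$ and the definitions $\Phi^{C_2}X=(\widetilde{EC_2}\wedge X)^{C_2}$ and $X^{tC_2}=(\widetilde{EC_2}\wedge F(EC_{2+},X))^{C_2}$ to exhibit the pairing $KU\wedge M\to M$ as inducing the required module multiplication after passing to the relevant fixed points. No further calculation is needed; once the module structure is in place, contractibility is formal.
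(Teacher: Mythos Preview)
Your proposal is correct and matches the paper's proof essentially verbatim: the paper simply observes that $M^{tC_2}$ and $\Phi^{C_2}M$ are modules over $KU^{tC_2}$ and $\Phi^{C_2}KU$, respectively, which are contractible by \Cref{thm:tatevanish}. Your added remarks about lax monoidality or the explicit construction via $\widetilde{EC_2}$ are reasonable justifications for the module structure, which the paper leaves implicit.
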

\begin{proof}
The spectra $M^{tG}$ and $\Phi^G M$ are modules over $KU^{tG}$ and $\Phi^G KU$, respectively, and these are contractible by~\Cref{thm:tatevanish}.
\end{proof}

\begin{cor}\label{cor:FixedHtpyFixed}
Let $M$ be a $C_2$-equivariant $KU$ module. Then the fixed points $M^{C_2}$ and the homotopy fixed points $M^{hC_2}$ are equivalent.
\end{cor}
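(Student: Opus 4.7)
The plan is to deduce this corollary directly from the norm cofibration diagram \eqref{eq:NormCofibration} of \Cref{sec:norm}, combined with the vanishing result just established in the preceding Proposition. Since $M$ is a $C_2$-equivariant $KU$-module, that Proposition tells us that both the geometric fixed point spectrum $\Phi^{C_2} M$ and the Tate spectrum $M^{tC_2}$ are contractible.

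Applied to $M$, diagram \eqref{eq:NormCofibration} takes the form
\[
\xymatrix{ M_{hC_2} \ar[r] \ar@{=}[d] & M^{C_2} \ar[r] \ar[d] & \Phi^{C_2} M \ar[d] \\
M_{hC_2} \ar[r] & M^{hC_2} \ar[r] & M^{tC_2},
}
\]
in which both rows are cofiber sequences. Since $\Phi^{C_2} M \simeq \ast$, the top cofiber sequence forces the norm map $M_{hC_2} \to M^{C_2}$ to be an equivalence; likewise, since $M^{tC_2} \simeq \ast$, the bottom cofiber sequence forces $M_{hC_2} \to M^{hC_2}$ to be an equivalence.

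The left-hand vertical arrow is the identity, so commutativity of the left square identifies the canonical comparison map $M^{C_2} \to M^{hC_2}$ with the composite $M^{C_2} \xleftarrow{\sim} M_{hC_2} \xrightarrow{\sim} M^{hC_2}$; hence it is itself an equivalence. There is essentially no obstacle here: the entire content has been absorbed into the vanishing of $\Phi^{C_2} M$ and $M^{tC_2}$, which is the step that required actual work in the previous sections.
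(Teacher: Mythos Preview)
Your proof is correct and follows exactly the same approach as the paper: both invoke diagram~\eqref{eq:NormCofibration} and the preceding proposition to conclude that the two rightmost spectra $\Phi^{C_2}M$ and $M^{tC_2}$ vanish, forcing the comparison map $M^{C_2}\to M^{hC_2}$ to be an equivalence. The only difference is that you spell out the details, whereas the paper simply says the result follows immediately.
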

\begin{proof}
Follows immediately from the diagram \eqref{eq:NormCofibration}, in which the two rightmost spectra become contractible by the above proposition.
\end{proof}

\section{The homotopy orbit spectrum for $KO$}

To calculate the homotopy orbit spectral sequence we can use the Tate cohomology groups as given in~\Cref{eq:tateDef} as well as the calculations shown in~\Cref{fig:TateSS}. We just need to calculate the co-invariants; for the trivial action $H_0(C_2;\Z) \simeq \Z$, whilst for the sign-representation $H_0(C_2,\Z_{sgn}) = \Z/2$.

We draw the homotopy orbit spectral sequence with cohomological grading, i.e. set $E_2^{s,t} = H_{-s}(C_2,\pi_tKU)$. The differentials can be inferred from those in the Tate spectral sequence. The resulting spectral sequence is shown in~\Cref{fig:HOSS}. Note that there has to be an additive extension as shown with the dashed line, since $\pi_0 KO \simeq \Z$ is torsion free. 
\begin{figure}[tbh]
\centering
\includegraphics{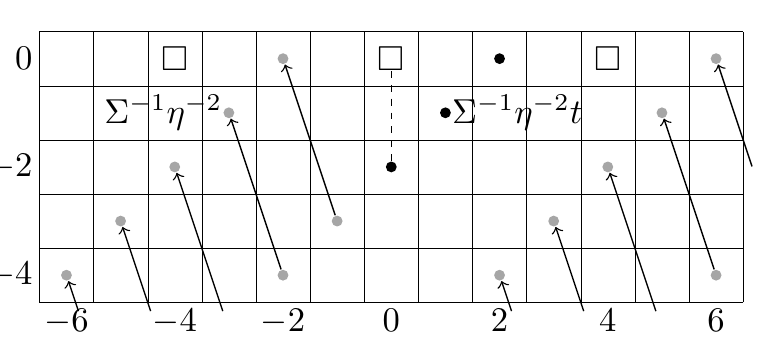}
\caption{The homotopy orbit spectral sequence for $KU$.}\label{fig:HOSS}
\end{figure}

\section{The Anderson dual of $KO$}\label{sec:KOanderson}

From~\Cref{eq:AD,thm:tatevanish}, we get the following sequence of equivalences
\[
I_\Z KO \simeq F(KU^{hC_2},I_{\Z}) \simeq F(KU_{hC_2},I_{\Z}) \simeq F(KU,I_{\Z})^{hC_2}.
\]
Note that even though $I_{\Z} KU \simeq KU$, this does not imply that $(I_\Z KU)^{hC_2} \simeq KO$, as the action may have changed. However, the action on the homotopy groups of $I_\Z KU$ is unchanged as both the trivial representation and the sign representation of $C_2$ are self-dual. 
Hence the $E_2$-terms of the HFPSS associated to $KU$ and $I_\Z KU$ are isomorphic, and a twist in the action would alter the differential pattern. Since the $E_2$-term of the HFPSS's is 4-periodic, and the HFPSS of $I_{\Z} KU$ is a module over the one for $KU$, we have that $I_\Z KO$ is either $KO$ or $\Sigma^4 KO$. We will show that it is the latter by determining the pattern of differentials.

\begin{theorem}\label{theorem:andersonKO}
The Anderson dual of $KO$ is $\Sigma^4 KO$.
\end{theorem}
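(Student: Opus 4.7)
The plan is to complete the reduction preceding the theorem, which narrows the problem to deciding whether $I_\Z KO \simeq KO$ or $I_\Z KO \simeq \Sigma^4 KO$. Both options arise as a priori possibilities because the HFPSS for $I_\Z KU$ shares its $E_2$-page $\Z[\eta, t^{\pm 1}]/(2\eta)$ with that of $KU$ and is a module over the latter. By the Leibniz rule applied to $d_3(t) = \eta^3$, all $d_3$-differentials are determined once we specify the single value $d_3(m_0) \in E_3^{-1,3} \cong \Z/2$ on the generator $m_0$ in bidegree $(t-s,s)=(0,0)$. The two possibilities $d_3(m_0) = 0$ and $d_3(m_0) = t^{-1}\eta^3 m_0$ yield the differential patterns of the HFPSS for $KO$ and for $\Sigma^4 KO$, respectively.

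To select the correct value, I would compute $\pi_{-2} I_\Z KO$ independently using the Anderson Ext spectral sequence~\eqref{eq:andersonSS}. The only contribution in total degree $-2$ comes from $\Ext^1_\Z(\pi_1 KO, \Z) = \Ext^1_\Z(\Z/2, \Z) = \Z/2$, so $\pi_{-2} I_\Z KO = \Z/2$. On the other hand, by $8$-fold periodicity, $\pi_{-2} KO = \pi_6 KO = 0$, which rules out $I_\Z KO \simeq KO$ and therefore forces $d_3(m_0) = t^{-1}\eta^3 m_0$.

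With this differential in hand, the $E_\infty$-page of the HFPSS for $I_\Z KU$ matches that of $\Sigma^4 KO$; in particular, $\pi_* I_\Z KO$ becomes a free $\pi_* KO$-module of rank one on a generator in degree $4$, and~\Cref{prop:moduleeq} concludes $I_\Z KO \simeq \Sigma^4 KO$. The hard part of this plan is the identification of $d_3(m_0)$: both candidate values are intrinsically consistent with the $KU$-module structure on the HFPSS, and an independent input is needed. The algebraic computation of a single homotopy group via the short Anderson Ext spectral sequence is what breaks the ambiguity cleanly.
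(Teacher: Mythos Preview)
Your proof is correct, but it proceeds differently from the paper's. The paper identifies the HFPSS for $(I_\Z KU)^{hC_2}$ with the $\Z$-linear dual of the homotopy orbit spectral sequence for $KU_{hC_2}$ by setting up a commuting square of spectral sequences in the style of Deligne's ``lemma of two filtrations,'' and then reads off all differentials directly from the HOSS computed in the preceding section. You instead exploit the module structure of the HFPSS over that of $KU$ to reduce everything to the single unknown $d_3(m_0)\in E_3^{3,2}\cong\Z/2$, and then resolve that binary choice by the independent computation $\pi_{-2} I_\Z KO \cong \Ext^1_\Z(\pi_1 KO,\Z)\cong\Z/2$ from the short spectral sequence~\eqref{eq:andersonSS}. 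Both arguments finish with~\Cref{prop:moduleeq}.

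Your route is more elementary and avoids the two-filtrations machinery entirely; for $KO$ it is arguably cleaner. The paper's approach, on the other hand, is structural: it produces the entire differential pattern at once as a formal duality, and it is precisely this mechanism that the second author generalizes to the $Tmf$ computation in~\cite{stojanoska2012duality}, where the dichotomy is not so simple and a single homotopy-group check would not suffice. Note also that your argument really does need the HFPSS (and not just the Anderson Ext spectral sequence) to see the $KO_*$-module structure on $\pi_* I_\Z KO$---this is exactly the point made at the end of~\Cref{sec:andersonbasics} about why the ``direct approach'' alone cannot detect $\eta$-multiplication on the dual non-torsion classes; your hybrid of the two spectral sequences neatly sidesteps that obstacle.
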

\begin{proof}
The argument here will follow the proof of Theorem 13.1 of~\cite{stojanoska2012duality}. Our goal is to show that the spectral sequence computing $F(KU_{hC_2},I_{\Z})$ is the linear dual of the homotopy orbit spectral sequence for $KO = KU_{hC_2}$. 

As in the proof of~\cite[Thm.13.1]{stojanoska2012duality} (which in turn is modeled after Deligne's ``Lemma of two filtrations"~\cite{Deligne}), we can construct a commuting square of spectral sequences
\[
\xymatrix{
\Ext^v_\Z(H_h(C_2,\pi_t KU),\Z) \ar@{=>}[r]^*+[Fo]{B}
 \ar@{=>}[d]_*+[Fo]{A} & \Ext^v_\Z(\pi_{t+h}(KU)_{hC_2},\Z) \ar@{=>}[d]^*+[Fo]{D} \\
H^{h+v}(C_2,\pi_{-t}I_\Z KU) \ar@{=>}[r]_*+[Fo]{C} & \pi_{-t-h-v} I_\Z(KU_{hC_2}).
}
\]
Here $B$ is dual to the homotopy orbit spectral sequence, $A$ is obtained via the Grothendieck composite functor spectral sequence associated to the functors $\Hom$ and coinvariants, and $D$ is the Anderson duality spectral sequence given by~\Cref{eq:andersonSS}. We want to show that the spectral sequences $B$ and $C$ are isomorphic. As explained in~\cite{stojanoska2012duality}, the differentials
in $B$ are compatible with the filtration giving $C$ if and only if $A$ collapses, which holds in our case. Consequently, $B$ and $C$ are isomorphic. Thus we apply $\text{Ext}_{\Z}(-,\Z)$ to homotopy orbit spectral sequence to obtain the homotopy fixed point spectral sequence for computing the Anderson dual of $KO$, and we can read the differentials straight from the homotopy orbit spectral sequence. 

A diagram of the spectral sequence can be seen in~\Cref{fig:HFPSS}. This gives an isomorphism on homotopy $\pi_*(I_\Z KO) \simeq \pi_*(\Sigma^4 KO)$. Invoking~\cref{prop:moduleeq} now produces an equivalence $\Sigma^4KO \simeq I_\Z KO$.
\end{proof}
\begin{figure}[h!]
\centering
\includegraphics{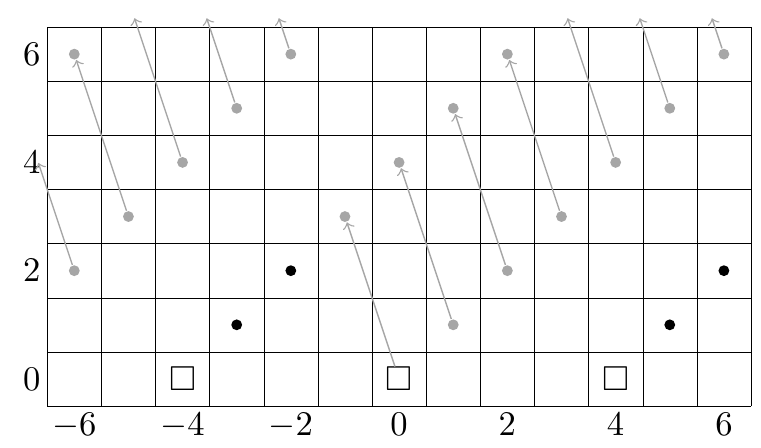}
\caption{The homotopy fixed point spectral sequence for $I_\Z KU$}\label{fig:HFPSS}
\end{figure}

We are finally in a position to prove~\Cref{thm:Andersondual}. 
\begin{theorem}\label{thm:Andersondual}
The Anderson dual $F(KU, I_\Z)$ is $C_2$-equivariantly equivalent to $\Sigma^4KU$.
\end{theorem}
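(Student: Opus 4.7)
The plan is to deduce the equivariant statement from the non-equivariant identification $I_\Z KO \simeq \Sigma^4 KO$ (\Cref{theorem:andersonKO}) by invoking descent along the $C_2$-Galois extension $KO \to KU$. Both $F(KU, I_\Z)$ (with $C_2$ acting by conjugation through the source) and $\Sigma^4 KU$ are naturally $C_2$-equivariant $KU$-modules, and by the Mathew--Meier affineness discussed in \Cref{sec:andersonduality} (equivalently, the equivalence of derived stacks $(BC_2, KU) \simeq \Spec KO$), the category of $C_2$-equivariant $KU$-modules is equivalent to that of $KO$-modules via $M \mapsto M^{hC_2}$, with counit $M^{hC_2} \wedge_{KO} KU \xrightarrow{\simeq} M$. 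Thus it suffices to compute $F(KU, I_\Z)^{hC_2}$ as a $KO$-module and compare.

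First, I would identify $F(KU, I_\Z)^{hC_2}$. Using the standard adjunction $F(X, Y)^{hG} \simeq F(X_{hG}, Y)$, valid whenever $Y$ has trivial $G$-action (since $X_{hG}$ is the colimit of $X$ over $BG$), applied to $X = KU$, $Y = I_\Z$, and $G = C_2$, we get
\[ F(KU, I_\Z)^{hC_2} \simeq F(KU_{hC_2}, I_\Z). \]
The key input from earlier sections is now \Cref{cor:FixedHtpyFixed} applied to the $C_2$-spectrum $KU$ itself: the vanishing of $KU^{tC_2}$ (\Cref{thm:tatevanish}) makes the norm map an equivalence $KU_{hC_2} \simeq KU^{hC_2} = KO$. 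Consequently,
\[ F(KU, I_\Z)^{hC_2} \simeq F(KO, I_\Z) = I_\Z KO \simeq \Sigma^4 KO, \]
where the last equivalence is \Cref{theorem:andersonKO}.

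Finally, the descent equivalence produces the desired equivariant equivalence: smashing the chain above with $KU$ over $KO$ and using the counit isomorphism gives
\[ F(KU, I_\Z) \simeq F(KU, I_\Z)^{hC_2} \wedge_{KO} KU \simeq \Sigma^4 KO \wedge_{KO} KU \simeq \Sigma^4 KU \]
as $C_2$-equivariant $KU$-modules, which in particular is the required equivariant equivalence of spectra.

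The only real obstacle in this plan is conceptual rather than computational: one must be confident that the affineness/descent machinery can indeed be applied to recover $F(KU, I_\Z)$ from its homotopy fixed points as a $C_2$-$KU$-module, and that the adjunction $F(X,Y)^{hG} \simeq F(X_{hG}, Y)$ and the norm equivalence $KU_{hC_2} \simeq KU^{hC_2}$ fit together coherently as maps of $KO$-modules. All of this is exactly what has been set up in \Cref{sec:andersonduality} through \Cref{sec:KOanderson}, so once those pieces are in place the argument reduces to the above three formal manipulations.
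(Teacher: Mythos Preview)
Your argument is correct and takes a genuinely different route from the paper's proof. The paper does not invoke the Mathew--Meier descent equivalence of categories at this point; instead, it argues directly that the non-equivariant $KU$-module generator $d\colon S^4 \to I_\Z KU$ (produced via \Cref{prop:moduleeq}) can be refined to a $C_2$-equivariant map. It does so by analyzing the $C_2$-cofiber sequence $C_{2+} \to S^0 \to S^\sigma$: mapping into $I_\Z KU$ shows that the obstruction to lifting $d$ equivariantly lies in $[S^{\sigma+3}, I_\Z KU]_{C_2}$, which equivariant Bott periodicity and \Cref{cor:FixedHtpyFixed} identify with $\pi_2(I_\Z KU)^{hC_2} = \pi_2 I_\Z KO$, and this vanishes by \Cref{theorem:andersonKO}. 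So the paper's proof is an elementary obstruction-theoretic lift inside genuine $C_2$-equivariant homotopy theory.

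Your approach instead packages the problem categorically: since the descent equivalence between $C_2$-equivariant $KU$-modules and $KO$-modules sends both $I_\Z KU$ and $\Sigma^4 KU$ to $\Sigma^4 KO$ (the former via $(I_\Z KU)^{hC_2} \simeq I_\Z(KU_{hC_2}) \simeq I_\Z KO$), the two must already agree as $C_2$-$KU$-modules. This is slicker and would generalize more readily to other Galois situations, but it imports the external affineness machinery as a black box, whereas the paper's argument is essentially self-contained given equivariant Bott periodicity. The coherence issue you flag---that the norm equivalence and the adjunction must be $KO$-linear---is real but routine: the norm map is a natural transformation of functors on $C_2$-spectra, hence automatically respects the $KO$-module structure coming from the $C_2$-trivial action of $KO$ on $KU$.
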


\begin{proof}
By~\Cref{prop:moduleeq}, we can construct a non-equivariant equivalence $\Sigma^4 KU \to I_\Z KU$ which is nevertheless a $KU$-module map. We claim that this map has an equivariant refinement, or equivalently, that the spectrum map $d: S^4 \to I_\Z KU$ generating the dualizing class has an equivariant refinement.
To see that, consider the $C_2$-cofiber sequence
\[ C_{2+} \to S^0 \to S^{\sigma},\]
which after mapping to $I_\Z KU$ on $\pi_4^{C_2}$ gives an exact sequence 
\[ [S^4, I_\Z KU ]_{C_2} \to [S^4, I_\Z KU ] \to [S^{\sigma+3},I_\Z KU ]_{C_2}. \]
It suffices to show that $d \in [S^4, I_\Z KU]$ maps to zero in $[S^{\sigma+3},I_\Z KU ]_{C_2}$. But using equivariant Bott periodicity and~\Cref{cor:FixedHtpyFixed}, the latter is equivalent to \[[S^2, I_\Z KU]_{C_2} \cong \pi_2 (I_\Z KU)^{C_2}\cong \pi_2 (I_\Z KU)^{hC_2}, \] which by~\Cref{theorem:andersonKO} is zero.
\end{proof}

\section{Applications to the $K(1)$-local Picard group}\label{sec:picard}

Fix a prime $p$; in this section we use the usual notation for the chromatic players. So, $K(n) $ and $E_n$ denote the Morava $K$ and $E$-theory spectra at height $n$; $\G_n$ is the (big) Morava stabilizer group, $L_n$ denotes localization with respect to $E_n$\footnote{or equivalently with respect to the Johnson-Wilson spectrum $E(n)$}, and $M_n$ is the monochromatic layer functor, i.e. the homotopy fiber of the map $L_n\to L_{n-1}$.

As currently defined, even if $X$ is a $K(n)$-local spectrum, $I_{\Q/\Z}X$ need not be. The relevant details to properly extend Brown-Comenetz duality to the $K(n)$-local category have been worked out by Gross and Hopkins~\cite{hopkins1994rigid}, and this form of duality also goes by the name Gross-Hopkins duality. 
For $K(n)$-local $X$, the Gross-Hopkins dual of $X$ is defined as $$I_nX = F(M_nX,I_{\Q/\Z});$$
This spectrum is $K(n)$-local; indeed by~\cite[Proposition 2.2]{stojanoska2012duality} it is in fact isomorphic to $L_{K(n)}F(L_nX,I_{\Q/\Z})$. 

Since $I_\Q X$ becomes contractible after $K(n)$-localisation (for $n \ge 1$) we get that, if $X$ is $E_n$-local, the fiber sequence in~\Cref{eq:fibSeq} gives
\begin{equation}\label{eq:andersonBC}
I_nX = L_{K(n)}\Sigma I_{\Z} X.
\end{equation}

Since $KO$ is $E_1$-local we immediately obtain the following statement.
\begin{cor}\label{cor:K1localBCdual}
The $K(1)$-local Brown-Comenetz dual of $KO$ is given by 
\[
I_1 KO \simeq \Sigma^5 KO.
\]	
\end{cor}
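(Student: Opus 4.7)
The plan is to derive this as an immediate corollary by combining the paper's computation of $I_{\Z} KO$ (\Cref{theorem:andersonKO}) with the general relationship between Anderson duality and $K(1)$-local Brown--Comenetz duality recorded in~\eqref{eq:andersonBC}.

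First, I would observe that $KO$ is $E_1$-local (this is classical; at the prime $2$ it follows from the fact that $KO$ is a retract of $KU$ after $2$-completion, and $KU$ is Landweber exact of height~$1$). Consequently, \eqref{eq:andersonBC} applies and yields
\[
I_1 KO \;\simeq\; L_{K(1)} \Sigma I_{\Z} KO.
\]
Next, I would feed in \Cref{theorem:andersonKO}, which asserts $I_{\Z} KO \simeq \Sigma^4 KO$. Substituting gives
\[
I_1 KO \;\simeq\; L_{K(1)} \Sigma^{5} KO \;\simeq\; \Sigma^{5} L_{K(1)} KO,
\]
where in the last step I use that $L_{K(1)}$, being an exact localization functor on spectra, commutes with suspensions.

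The only remaining point, and the only one requiring any discussion at all, is identifying $L_{K(1)} KO$ with $KO$ on the right hand side. In the $K(1)$-local category (at $p=2$), the spectrum denoted $KO$ is understood to be its $K(1)$-localization, which agrees with its $2$-completion since $KO$ is already $E(1)$-local and $K(1)$-localization of an $E(1)$-local spectrum coincides with $p$-completion. Thus within the $K(1)$-local world, $L_{K(1)} KO = KO$ and we conclude $I_1 KO \simeq \Sigma^{5} KO$. There is no real obstacle here: the proof is a one-line corollary once~\Cref{theorem:andersonKO} is in hand, and the only care needed is in interpreting the symbol $KO$ consistently as its $K(1)$-localization in the target category.
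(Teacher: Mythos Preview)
Your proposal is correct and matches the paper's approach exactly: the paper also deduces the corollary immediately from \eqref{eq:andersonBC} together with \Cref{theorem:andersonKO}, noting only that $KO$ is $E_1$-local. Your extra remarks about $L_{K(1)}KO$ versus $KO$ are a reasonable gloss on the convention the paper adopts a few lines later.
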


The other main type of duality in the $K(n)$-local category is $K(n)$-local Spanier-Whitehead duality, defined by $D_nX = F(X,L_{K(n)}S^0)$, and now we relate it to Gross-Hopkins duality. To do so we need the Picard group of the $K(n)$-local homotopy category, introduced by Hopkins~\cite{hopkins1994constructions}. We call a $K(n)$-local spectrum $X$ invertible if there exists a spectrum $X^{-1}$ such that $L_{K(n)}(X \wedge X^{-1}) \simeq L_{K(n)}S^0$; the Picard group $\text{Pic}_n$ is the group of isomorphism classes of invertible objects. There is a map $\epsilon: \text{Pic}_n \to H^1(\G_n,(E_n)_0^\times)$, and if $p$ is large compared to $n$ then this map is injective~\cite[Prop.7.5]{hopkins1994constructions}. 

 \addtocounter{footnote}{1}

Hopkins, Mahowald and Sadofsky further show that a $K(n)$-local $X$ being invertible is equivalent to $(E_n)_*X$ being a free $(E_n)_*$-module of rank 1.\footnote{Note that we follow the usual convention in that $(E_n)_*X:=\pi_*L_{K(n)}(E_n \wedge X)$.} Noting that there is an equivalence $M_n X \simeq M_n L_{K(n)}X$, let $I_n$ denote $I_n L_{K(n)}S^0$. By work of Gross and Hopkins~\cite{hopkins1994rigid}, $(E_n)_*I_n$ is a free $(E_n)_*$-module of rank 1 and hence $I_n $ defines an element of $ \text{Pic}_n$. In particular $I_n$ is dualisable, so there is a $K(n)$-local equivalence
\begin{equation}\label{eq:grossSW}
I_nX \simeq F(X,I_n) \simeq  D_nX \wedge I_n .
\end{equation}

In fact more can be said by incorporating the action of the Morava stabilizer group $\G_n$. We recall briefly that there is a reduced determinant map $\G_n \to \Z_p^\times$, and if $M$ is a $(E_n)_*$-module with a compatible action of $\G_n$, then we write $M \sdet$ for the module with $\G_n$ action twisted by the determinant map. Then Hopkins and Gross in fact prove the stronger statement (see also~\cite{strickland2000gross}) that
\[
(E_n)_*I_n \simeq \Sigma^{n^2-n}(E_n)_* \sdet.
\]
There is a `twisted sphere' spectrum $S^0 \sdet$ (see~\cite[Remark 2.7]{goerss2012hopkins}) such that $(E_n)_* S \sdet \simeq (E_n)_* \sdet$ and thus whenever $\epsilon:\text{Pic}_n \to H^1(\G_n,(E_n)_0^\times)$ is injective there is an equivalence
\begin{equation}\label{eq:hopgross}
I_n \simeq \Sigma^{n^2-n} S^0\sdet.
\end{equation}

We will show that at $n=1,p=2$ this does \emph{not} hold, and hence that the map $\epsilon$ has non-trivial kernel traditionally denoted $\kappa_1$ and called the `exotic' $K(1)$-local Picard group. We note that at $n=1,  S^0\langle\text{det} \rangle \simeq S^2$, so trivial kernel would imply that $I_1 \simeq S^2$.

From now on, we set $p=2$ and omit $K(1)$-localization from the notation with the understanding that everything is in the $K(1)$-local category. Note that the Morava $E$-theory spectrum $E_1$ is now the $2$-completion\footnote{equivalently, $K(1)$-localization} of $KU$, the Morava stabilizer group $\G_1$ is $\Z_2^\times$ with its elements corresponding to $2$-adic Adams operations. The subgroup $C_2=\{ \pm 1\} \in \Z_2^\times $ is a maximal finite subgroup whose non-trivial element acts on $KU$ by $\psi^{-1}$ which is complex conjugation. 

Let $l$ be a topological generator of $\Z_2^{\times}/\{\pm 1\}$; it is well known (eg.~\cite{bou79}) that the $K(1)$ local sphere sits in a fiber sequence
\[ S \to KO \xrightarrow{\psi^l-1} KO.\]
Taking the Anderson dual of this sequence gives
\[ \Sigma^4KO \xrightarrow{I_\Z\psi^l -1} \Sigma^4KO \to \Sigma^{-1}I_1. \]
Let $P$ denote $\Sigma^{-2}I_1$; hence $P$ is an element of $\Pic_1$ such that $P\neq S$ if and only if $I_1 \neq S^2$.

\begin{lemma}\label{lem:dualAdams}
The Anderson dual of the Adams operation $\psi^l$ is \[I_\Z\psi^l=\Sigma^4(l^{-2}\psi^{1/l}) = \Sigma^{-4}(l^2 \psi^{1/l}) .\]
\end{lemma}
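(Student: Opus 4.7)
The plan is to compute $I_\Z\psi^l$ by comparing two equivalences of $I_\Z KU$: the non-equivariant $KU$-module equivalence $\nu\colon KU \xrightarrow{\sim} I_\Z KU$ from \Cref{prop:moduleeq}, and the $C_2$-equivariant equivalence $\iota\colon \Sigma^4 KU \xrightarrow{\sim} I_\Z KU$ from \Cref{thm:Andersondual}. Under $\nu$, a direct computation shows that the conjugate $\nu^{-1}(I_\Z\psi^l)\nu$ equals $\psi^{1/l}$ on $KU$: on $\pi_{-2n}I_\Z KU=\Hom(\pi_{2n}KU,\Z)$, Anderson duality sends $I_\Z\psi^l$ to precomposition with $\psi^l$, which scales the dual of $u^n$ by $l^n$; transporting to $\pi_{-2n}KU=\Z\cdot u^{-n}$ via $\nu$ gives exactly multiplication by $l^n=(1/l)^{-n}$, i.e.\ $\psi^{1/l}$.

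The second step transfers this identification through $\iota$. Since $\pi_*I_\Z KU$ is free of rank one over $\pi_*KU$ via $\nu$, and $\iota$ is also $KU$-linear, the composite $\phi=\nu^{-1}\iota\colon \Sigma^4 KU\to KU$ must be multiplication by a unit multiple of $u^2\in\pi_4 KU$; after absorbing the unit we take $\phi=u^2$. Then $\iota^{-1}(I_\Z\psi^l)\iota=\phi^{-1}\psi^{1/l}\phi$, and because $\psi^{1/l}$ is a ring map with $\psi^{1/l}(u^2)=l^{-2}u^2$ we compute
\[\phi^{-1}\psi^{1/l}\phi(x)=u^{-2}\psi^{1/l}(u^2 x)=u^{-2}\cdot l^{-2}u^2\cdot\psi^{1/l}(x)=l^{-2}\psi^{1/l}(x),\]
so $\iota^{-1}(I_\Z\psi^l)\iota=\Sigma^4(l^{-2}\psi^{1/l})$ as a self-map of $\Sigma^4 KU$. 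Because $\psi^l$ commutes with complex conjugation, this construction is $C_2$-equivariant throughout; taking homotopy fixed points and invoking \Cref{cor:FixedHtpyFixed} together with \Cref{theorem:andersonKO} yields the identity $I_\Z\psi^l=\Sigma^4(l^{-2}\psi^{1/l})$ on $I_\Z KO\simeq\Sigma^4 KO$.

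For the second form $\Sigma^{-4}(l^2\psi^{1/l})$, Bott periodicity supplies an equivalence $\Sigma^{-4}KO\xrightarrow{\cdot\beta}\Sigma^4 KO$ where $\beta\in\pi_8KO$ is the real Bott class. Since $\beta$ maps to $u^4$ in $KU$ we have $\psi^{1/l}(\beta)=l^{-4}\beta$, and conjugating $\psi^{1/l}$ by $\beta$ multiplies it by $l^{-4}$; hence $\Sigma^{-4}(l^2\psi^{1/l})$ becomes $\Sigma^4(l^2\cdot l^{-4}\psi^{1/l})=\Sigma^4(l^{-2}\psi^{1/l})$, matching the first form. The main obstacle is the rigidity statement in the second paragraph: pinning down that $\phi=\nu^{-1}\iota$ is genuinely multiplication by $u^2$ (and not some other unit multiple), which requires using the explicit construction of the equivariant dualizing class via the cofiber sequence $C_{2+}\to S^0\to S^\sigma$ from the proof of \Cref{thm:Andersondual}; the rest is a formal consequence of Adams operations being ring maps together with Bott periodicity.
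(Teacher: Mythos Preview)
Your argument has a genuine gap at the passage from $KU$ to $KO$. You write that ``this construction is $C_2$-equivariant throughout,'' but it is not: the equivalence $\nu$ is explicitly the \emph{non}-equivariant one, so the chain $\iota^{-1}(I_\Z\psi^l)\iota=\phi^{-1}\psi^{1/l}\phi=l^{-2}\psi^{1/l}$ is only an equality of non-equivariant self-maps of $\Sigma^4 KU$. Both sides happen to be equivariant, but two $C_2$-equivariant maps that are non-equivariantly homotopic need not be equivariantly homotopic, so you cannot simply ``take homotopy fixed points'' of this equality. To close the gap you would need to know that the forgetful map $[KU,KU]^{C_2}\to[KU,KU]$ is injective; this is true (it follows from an analysis of the HFPSS for $F(KU,KU)$, or equivalently from the fact that $K(1)$-locally $F(KU,KU)\simeq KU[[\Z_2^\times]]$ with trivial conjugation action on the profinite group ring factor), but you neither state nor prove it.

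By contrast, your stated ``main obstacle''---pinning down $\phi$ as exactly $u^2$ rather than a unit multiple---is a red herring: any unit $c\in\pi_0KU$ is fixed by every Adams operation, so conjugation by $cu^2$ gives the same answer as conjugation by $u^2$.

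The paper takes a more direct route that avoids the $KU$ detour entirely. It works on $KO$ from the start, invoking the key structural input that $K(1)$-locally $F(KO,KO)\simeq KO[[\Z_2^\times/\{\pm1\}]]$, so that a self-map of $KO$ is determined by its effect on the non-torsion homotopy groups $KO_{4n}$. One then just computes: $\psi^l$ scales $t\in KO_{4n}$ by $l^{2n}$, dualizing gives that $I_\Z\psi^l$ scales $t^\vee\in(I_\Z KO)_{-4n}$ by $l^{2n}$, and after the degree shift $\Sigma^{-4}I_\Z KO\simeq KO$ this matches $l^{-2}\psi^{1/l}$. The ``determined by homotopy'' fact is exactly the missing ingredient in your approach as well---once you have it, there is no need to pass through $KU$ and wrestle with equivariance.
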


\begin{proof}
The idea of the proof is that the self-maps of $KO$ are detected by their effect on homotopy. To be more precise, we recall that there is an equivalence
\[ KO[[\Z_2^\times/\{\pm 1\}]] \to F(KO,KO), \]
where the left hand side is defined as $\holim_H (KO \wedge H)$, where $H$ runs over the finite quotients of the profinite group $\Z_2^\times/\{\pm 1\}$ (a reference is, for example, \cite[Prop.2.6]{GHMR}). In particular, this describes the homotopy classes of maps $KO \to KO$ as the completed group ring $\Z_2[[\Z_2^\times/\{\pm1\}]]$, i.e. power series on the $2$-adic Adams operations of $KO$, and these can be distinguished by what they do on the non-torsion elements of $KO_*$.

Let $t \in KO_{4n}$ be an arbitrary non-torsion element of $KO_*$. Then the Adams operation $\psi^l$ acts on $t$ as
\[
\psi^l(t) = l^{2n}t.
\]
Note that therefore, $\Sigma^{8m} \psi^l:KO\to KO$ is $l^{-4m} \psi^l$.

Since the linear dual of multiplication by $m$ is again multiplication by $m$, we see that
\[
I_{\Z} \psi^l:I_\Z KO \to I_\Z KO
\]
is given on the non-torsion homotopy by
\[
I_\Z \psi^l(t^\vee) = l^{2n} t^\vee.
\]
Identifying $\Sigma^{-4}I_\Z KO$ with $KO$, we get that 
\[ \Sigma^{-4}I_\Z \psi^l:KO \to KO \]
takes an element $t\in KO_{4n} \cong (I_\Z KO)_{4n+4}$ and maps it to 
$ l^{-2n-2} t = l^{-2} \psi^{1/l}(t) $.
\end{proof}
Note that, in particular, this lemma provides resolutions of $P$ as 
\begin{equation}\label{eq:P-resolution}
\begin{aligned}
P \to \Sigma^4KO \xrightarrow{\Sigma^4(l^{-2}\psi^{1/l})-1} \Sigma^4 KO, \text{ or} \\
P \to\Sigma^{-4}KO \xrightarrow{\Sigma^{-4}(l^{2}\psi^{1/l})-1} \Sigma^{-4} KO,
\end{aligned}
\end{equation}
which are a $K(1)$-local analogue (us to a suspension shift) of the resolution of the $K(2)$-local Brown-Comenetz spectrum of Goerss-Henn~\cite{goerss2012brown}. We smash the second resolution with $\Sigma^{4} KO$ to deduce the following result.

\begin{prop}\label{prop:PKO}
The smash product $P\wedge KO$ is $\Sigma^4 KO$. Consequently, $P$ defines a non-trivial element of $\kappa_1$.
\end{prop}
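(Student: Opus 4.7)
The plan is to smash the second resolution of $P$ from~\eqref{eq:P-resolution} with $\Sigma^4 KO$ and to identify the resulting fiber directly as $KO$. After cancelling the suspensions, $\Sigma^{-4}KO\wedge\Sigma^4 KO\simeq KO\wedge KO$, and we obtain a $K(1)$-local fiber sequence
\[
\Sigma^4(P\wedge KO) \;\longrightarrow\; KO\wedge KO \;\xrightarrow{(l^2\psi^{1/l})\wedge 1-1}\; KO\wedge KO.
\]
If I can show the fiber of the right-hand map is $KO$, then $\Sigma^4(P\wedge KO)\simeq KO$, and $P\wedge KO\simeq\Sigma^{-4}KO\simeq\Sigma^4 KO$ by $8$-fold Bott periodicity.

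To identify the fiber, I would use Galois descent: the inclusion $L_{K(1)}S\to KO$ is a pro-Galois extension with group $G=\Z_2^\times/\{\pm 1\}$, giving a $K(1)$-local equivalence
\[
KO\wedge KO \;\simeq\; \Mapc(G,KO),\qquad a\wedge b\longmapsto\bigl(g\mapsto a\cdot g(b)\bigr).
\]
Under this identification, a direct calculation using abelianness of $G$ and the fact that $l$ is a ring map gives $(\psi^l\wedge 1)(f)(g)=l(f(l^{-1}g))$, while the scalar $l^2\in\pi_0 KO$ acts by pointwise multiplication. The fixed-point equation $((l^2\psi^{1/l})\wedge 1)(f)=f$ then rearranges to the twisted recursion
\[
f(lg) \;=\; l^{-2}\psi^l\bigl(f(g)\bigr).
\]
Since $l$ topologically generates $G$, any continuous solution $f\colon G\to KO$ is uniquely determined by $f(1)\in KO$ via the explicit formula $f(l^\alpha)=l^{-2\alpha}\psi^{l^\alpha}(f(1))$, and the right $KO$-action on $\Mapc(G,KO)$ restricts on solutions to the standard action on $f(1)$. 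This exhibits the fiber as a copy of $KO$.

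For the Picard-theoretic consequence: since $P\wedge KO\simeq\Sigma^4 KO\not\simeq KO=S\wedge KO$, we conclude $P\not\simeq S$ in $\Pic_1$. On the other hand, the unconditional Hopkins-Gross formula $(E_n)_\ast I_n\simeq\Sigma^{n^2-n}(E_n)_\ast\sdet$, specialized to $n=1$, identifies $(E_1)_\ast I_1$ with $(E_1)_\ast S^2$ as graded $\G_1$-modules, so
\[
\epsilon(P) \;=\; \epsilon(\Sigma^{-2}I_1) \;=\; -\epsilon(S^2)+\epsilon(I_1) \;=\; 0 \quad\text{in}\quad H^1\bigl(\G_1,(E_1)_0^\times\bigr).
\]
Hence $P$ is a non-trivial element of $\kappa_1=\ker\epsilon$.

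The main obstacle is the Galois-theoretic fiber identification, specifically verifying that the formula $f(l^\alpha)=l^{-2\alpha}\psi^{l^\alpha}(f(1))$ defines a bona fide element of $\Mapc(G,KO)$. This reduces to the $2$-adic continuity of the character $\alpha\mapsto l^{-2\alpha}$ (well-defined because $l^{-2}\in 1+8\Z_2$) and of the profinite Adams operation $\alpha\mapsto\psi^{l^\alpha}$, both of which are built into the pro-Galois structure of $L_{K(1)}S\to KO$.
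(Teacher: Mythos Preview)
Your approach is essentially the paper's: smash the resolution with $\Sigma^4 KO$, identify $KO\wedge KO$ with continuous functions on $G=\Z_2^\times/\{\pm 1\}$, and compute what the map $(l^2\psi^{1/l}-1)\wedge 1$ does to such functions. The paper carries this out on homotopy groups, using $\pi_*(KO\wedge KO)\cong \Mapc(G,\Z_2)\otimes_{\Z_2} KO_*$, while you phrase it at the spectrum level with $KO$-valued functions; this is a cosmetic difference.

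There is, however, a genuine gap. You compute only the \emph{kernel} of the self-map (the ``solutions'' of the functional equation) and declare this to be the fiber. But the fiber of a map of spectra is governed by a long exact sequence in homotopy, and $\pi_*\operatorname{fib}(\alpha)\cong\ker\pi_*\alpha$ only when $\pi_*\alpha$ is surjective. You never check this. The paper does: it writes down, for any $g\in\Mapc(G,\Z_2)$, an explicit preimage
\[
f(k)=\sum_{n\ge 1} l^{-2n}\,g(l^n k),
\]
which converges $2$-adically because $l^{-2}\in 1+8\Z_2$. The same telescoping works in your $KO$-valued setting (with the extra $\psi^{l^n}$ factors), and once you have it the long exact sequence collapses, $\pi_*(\Sigma^4 P\wedge KO)\cong KO_*$ as a $KO_*$-module, and \Cref{prop:moduleeq} finishes. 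Without the surjectivity step your argument is incomplete: as written, it does not rule out a nontrivial cokernel contributing to $\pi_*(P\wedge KO)$. Your treatment of the ``consequently'' clause is fine and in fact more explicit than the paper's, which leaves that deduction to the discussion preceding the proposition.
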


\begin{proof}%[Proof of~\cref{prop:PKO}]
We consider the fiber sequence of $KO$-modules
\[ \Sigma^{4}P \wedge KO \to KO\wedge KO \xrightarrow{\alpha}  KO\wedge KO,\]
from which we will determine $\pi_*\Sigma^{4}P\wedge KO$, where for brevity we have denoted by $\alpha$ the map $(l^{2}\psi^{1/l}-1) \wedge 1$. The homotopy groups of $KO\wedge KO$ are (see \cite[Prop.1]{hopkins1998k} or \cite[Prop.2.4]{GHMR})
\[  \pi_* (KO\wedge KO) \cong KO_0KO \otimes_{KO_0} KO_* \cong \Mapc(\Z_2^\times/\{ \pm 1\},\Z_2) \otimes_{\Z_2} KO_*, \]
and the map $\alpha$ sends $f \otimes x \in \Mapc(\Z_2^\times/\{ \pm 1\},\Z_2) \otimes_{\Z_2} KO_* $ to $g\otimes x$, where $g \in \Mapc(\Z_2^\times/\{ \pm 1\},\Z_2)$ is the function determined by
\[ g(k) = l^2 f(k/l) - f(k). \]
Consequently, $f\otimes x$ is in the kernel of $\pi_*\alpha$ if and only if for every $k\in \Z_2^\times/\{\pm 1\}$, we have $f(k)=l^2 f(k/l)$. In particular, the kernel of $\pi_0 \alpha$ is $\Z_2$, where $a\in \Z_2$ corresponds to the continuous function $f_a$ determined by $f_a(1)=a$ and the relation $f_a(k)=l^2 f_a(k/l)$; thus the kernel of $\pi_* \alpha $ is $\Ker \pi_0\alpha \otimes KO_*$.

Next, we claim that $\pi_0 \alpha$, and therefore $\pi_*\alpha$, is surjective. Indeed, let $g$ be a function $\Z_2^\times/\{\pm 1 \} \to \Z_2$; then the function defined by
\[ f(k) = \sum_{n=1}^\infty \frac{1}{l^{2n}} g(l^nk )  \]
is such that $\pi_0(\alpha)(f)=g$.

We conclude that $\pi_*\Sigma^{4}P\wedge KO \cong \Ker\pi_0\alpha \otimes KO_* \cong KO_*$ as a $KO_*$-module. Appealing to~\cref{prop:moduleeq} now gives us that $\Sigma^{4}P\wedge KO \simeq KO$.
\end{proof}

\begin{prop}\label{prop:ordertwo}
The element $P $ of $\kappa_1$ has order two, i.e. $P\wedge P \simeq S$.
\end{prop}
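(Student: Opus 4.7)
The plan is to smash the duality resolution of $P$ with $P$ itself and match the resulting fiber sequence with Bousfield's fiber sequence for $S_{K(1)}$. Applying $P\wedge(-)$ to the second resolution in~\eqref{eq:P-resolution},
\[ P \to \Sigma^{-4} KO \xrightarrow{\beta} \Sigma^{-4} KO, \qquad \beta = \Sigma^{-4}(l^{2}\psi^{1/l})-1, \]
yields a fiber sequence $P\wedge P \to P\wedge \Sigma^{-4}KO \xrightarrow{1_P\wedge\beta} P\wedge \Sigma^{-4}KO$. By~\cref{prop:PKO} combined with the $8$-periodicity of $KO$, one may identify $P\wedge\Sigma^{-4}KO \simeq \Sigma^{-4}(P\wedge KO)\simeq\Sigma^{-4}\cdot\Sigma^{4}KO\simeq KO$, turning this into
\[ P\wedge P \to KO \xrightarrow{\gamma} KO \]
for some self-map $\gamma$ of $KO$, i.e., an element of $\pi_0 F(KO,KO)\cong \Z_2[[\Z_2^\times/\{\pm 1\}]]$.

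The crux of the argument is to identify $\gamma$, up to a unit, with $\psi^{1/l}-1$. Once this is established, Bousfield's description of the $K(1)$-local sphere, applied to the topological generator $1/l$ of $\Z_2^\times/\{\pm 1\}$, gives $\mathrm{fib}(\gamma)\simeq S_{K(1)}$, and hence $P\wedge P\simeq S_{K(1)}$. To make the identification, recall from the proof of~\cref{prop:PKO} that the equivalence $\Sigma^{4}P\wedge KO\simeq KO$ is obtained by identifying $\pi_0(\Sigma^{4}P\wedge KO)$ with the rank-one $\Z_2$-submodule of $\Mapc(\Z_2^\times/\{\pm 1\},\Z_2)\cong\pi_0(KO\wedge KO)$ generated by the function $f_1$ with $f_1(l^n)=l^{2n}$, and sending $f_1$ to the unit $1\in\pi_0 KO$. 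Tracing the right Adams action $1_P\wedge\psi^{1/l}$ through this identification, while accounting for the scalar $l^{2}$ and the $\Sigma^{-4}$-shift in $\beta$, computes $\gamma$ on $\pi_{4k}KO$ as multiplication by $l^{-2k}-1$; this is precisely the homotopy-level action of $\psi^{1/l}-1$. Since self-maps of $KO$ in the $K(1)$-local category are detected by their effect on the non-torsion homotopy (as in~\cref{lem:dualAdams}), this determines $\gamma$ up to a unit, as required.

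The principal obstacle is this second step: the careful bookkeeping needed to transport $1_P\wedge\psi^{1/l}$ through the equivalence of~\cref{prop:PKO}. In particular, one must unravel how the right Adams action on $KO_*KO$ interacts with the kernel submodule cutting out $\Sigma^{4}P\wedge KO$, which amounts to working with the bimodule structure of the $KO_*KO$ Hopf algebroid. With that in hand, the resolution of $P\wedge P$ obtained above agrees up to units with Bousfield's resolution of $S_{K(1)}$, and the proposition follows.
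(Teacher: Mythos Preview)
Your approach is essentially the same as the paper's: smash the resolution \eqref{eq:P-resolution} with $P$, use \cref{prop:PKO} to identify $P\wedge\Sigma^{-4}KO\simeq KO$, and then identify the induced self-map of $KO$ with $\psi^{1/l}-1$ so that Bousfield's fiber sequence finishes the argument. The one place where the paper is sharper is precisely the step you flag as ``the principal obstacle'': rather than asserting the outcome of the trace, the paper sets up the commutative square with $\alpha=(l^{2}\psi^{1/l}-1)\wedge 1$ and $\beta=1\wedge(l^{2}\psi^{1/l}-1)$ on $KO\wedge KO$, computes directly that for $f\in\Ker\alpha$ one has $\beta f(k)=l^{4}\psi^{1/l}(f(k))-f(k)$, and then applies the shift rule $\Sigma^{-8}(l^{4}\psi^{1/l})=\psi^{1/l}$; this makes your asserted formula $\gamma|_{\pi_{4k}}=l^{-2k}-1$ a two-line computation rather than a claim requiring the reader to unwind the bimodule structure.
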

\begin{proof}
The proof is similar to the one for~\cref{prop:PKO}; we smash the resolution of $P$ in~\cref{eq:P-resolution} with $P$ and use the description of $P\wedge KO$ from the proof of~\cref{prop:PKO}. Denote again by $\alpha$ the map $(l^2\psi^{1/l}-1)\wedge 1:KO\wedge KO$, and let $\beta$ denote the map $1\wedge (l^2\psi^{1/l}-1):KO\wedge KO$. Consider the commutative diagram
\[\xymatrix{
\Sigma^{-4}KO\wedge\Sigma^{-4}KO \ar[r]^{\Sigma^{-8}{\beta}} \ar[d]_{\Sigma^{-8}\alpha} & \Sigma^{-4}KO\wedge\Sigma^{-4}KO\ar[d]^{\Sigma^{-8}\alpha}\\
\Sigma^{-4}KO\wedge\Sigma^{-4}KO \ar[r]^{\Sigma^{-8}{\beta}} & \Sigma^{-4}KO\wedge\Sigma^{-4}KO;
} \]
we know that the vertical fibers are $P\wedge \Sigma^{-4} KO\simeq KO$, and the fiber of the induced map between them is $P\wedge P$.

Let us again identify the homotopy groups of $KO\wedge KO$ with $\Mapc(\Z_2^\times/\{\pm 1\},\Z_2)\otimes KO_*$; then (as in~\cite{hopkins1998k}) $\beta$ takes a function $f$ to the function $\beta f$ defined by
\[\beta f(k) = l^2 \psi^{1/l}(f(kl)) - f(k). \]
Now suppose that $f$ is in the kernel of $\alpha$, i.e. it is in the homotopy of the fiber of $\alpha$; then for every $k$, $f(kl)=l^2f(k)$, whence
\[\beta f(k) =l^4 \psi^{1/l}(f(k)) -f(k).  \]
This implies that the restriction of $\Sigma^{-8}\beta$ to the fibers of $\Sigma^{-8}\alpha$ is 
\[\Sigma^{-8}(l^4\psi^{1/l}- 1):  P\wedge\Sigma^{-4} KO \simeq KO \to P\wedge\Sigma^{-4} KO \simeq KO. \]
But we have that, as in the proof of~\Cref{lem:dualAdams},
\[\Sigma^{-8}(l^4\psi^{1/l} ) = \frac{1}{l^4} l^4 \psi^{1/l} = \psi^{1/l}, \]
so our fiber sequence becomes
\[ P\wedge P \to KO \xrightarrow{\psi^{1/l}-1 } KO, \]
showing that $P\wedge P\simeq S$, as $1/l$ is also a topological generator of $\Z_2^\times/\{\pm 1 \}$.
\end{proof}

The effect of~\Cref{prop:PKO,prop:ordertwo} is that we have produced a subgroup $\Z/2$ of $\kappa_1$ generated by $P$. It is known by the work of Hopkins-Mahowald-Sadofsky~\cite{hopkins1994constructions} that in fact this is all of $\kappa_1$; for completeness we include here what is essentially their construction of a surjective map $\kappa_1\to \Z/2$, thus recovering the following result.
 	
\begin{prop}
	The exotic Picard group $\kappa_1$ is $\Z/2 $ generated by $P$.
\end{prop}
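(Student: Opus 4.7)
Given~\Cref{prop:PKO,prop:ordertwo}, which already produce a subgroup $\langle P\rangle \cong \Z/2 \subseteq \kappa_1$, it suffices to construct a homomorphism $\Phi : \kappa_1 \to \Z/2$ sending $[P]$ to the non-trivial element: combined with the injection $\langle P \rangle \hookrightarrow \kappa_1$, such a $\Phi$ forces $\kappa_1 = \langle P \rangle \cong \Z/2$. Thus the entire task is to manufacture $\Phi$ and identify its value on $P$.

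The plan is to realize every $X \in \kappa_1$ as the fiber of a $K(1)$-local self-map of $KO$ and read off $\Phi$ from the resulting data. For $X \in \kappa_1$, the triviality of the Morava module $(E_1)_* X$ under $\G_1 = \Z_2^\times$ restricts to a $C_2$-equivariant equivalence $X \wedge KU \simeq KU$ (for $C_2 = \{\pm 1\}$), so that $X \wedge KO \simeq KO$ as $KO$-modules by taking homotopy fixed points; this extends the technique used in the proofs of~\Cref{prop:PKO,prop:ordertwo}. Smashing $X$ with the $K(1)$-local fiber sequence $S \to KO \xrightarrow{\psi^l-1} KO$ and applying this identification twice then presents $X$ as the fiber of a self-map $g_X : KO \to KO$. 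Since the ring of such self-maps is the completed group ring $\Z_2[[\Z_2^\times/\{\pm 1\}]]$, the map $g_X$ is a power series in $2$-adic Adams operations, well-defined up to pre- and post-composition by units (arising from the two chosen identifications $X \wedge KO \simeq KO$).

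The main obstacle is defining $\Phi$ and verifying its properties. Invertibility of $X$ should restrict $g_X$, modulo the unit action, to a normal form of the shape $u(\psi^s-1)$ for a unit $u$ and some topological generator $s$ of $\Z_2^\times/\{\pm 1\} \cong \Z_2$, and the invariant $\Phi([X])$ then records the class of $s$ in an appropriate $\Z/2$-quotient of the set of such generators. Checking that $\Phi$ is a well-defined group homomorphism and computing $\Phi([P])$ via the resolution in~\eqref{eq:P-resolution} (which involves $l^{\pm 2}\psi^{1/l}$ rather than $\psi^l$) completes the argument. The essential arithmetic input is the structure of the completed group ring $\Z_2[[\Z_2^\times/\{\pm 1\}]]$ and its unit group, and in particular the fact that the topological generators of the profinite cyclic group $\Z_2$ form a torsor over $\Z_2^\times$ whose quotient by the relevant unit action has exactly two elements.
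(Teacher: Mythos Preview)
Your opening reduction contains a logical gap: having an injection $\langle P\rangle \cong \Z/2 \hookrightarrow \kappa_1$ together with a homomorphism $\Phi:\kappa_1 \to \Z/2$ with $\Phi(P)\neq 0$ does \emph{not} force $\kappa_1 \cong \Z/2$. It only yields a splitting $\kappa_1 \cong \Z/2 \oplus \ker\Phi$, and you still owe an argument that $\ker\Phi$ is trivial; for a counterexample to the implication as stated, take $\kappa_1 = \Z/2\times\Z/2$. The paper handles precisely this point: it builds a homomorphism $\tau:\kappa_1\to H^3(\Z_2^\times,(KU)_2)\cong\Z/2$ by reading off the $d_3$ differential on the unit class $\iota_Z$ in the $K(1)$-local Adams--Novikov spectral sequence for $Z$, and then argues injectivity directly---if $d_3(\iota_Z)=0$, then $\iota_Z$ survives (one checks $E_4=E_\infty$) and represents an equivalence $L_{K(1)}S^0\to Z$.

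Your route via presenting each $X\in\kappa_1$ as the fiber of a self-map $g_X:KO\to KO$ is a different idea and could be made to work, but as written it is a plan rather than a proof. The assertions that $g_X$ admits a normal form $u(\psi^s-1)$ modulo the unit ambiguity, that the resulting invariant lands in a two-element set, and that $\Phi$ is a well-defined group homomorphism are all left as things to be checked. More importantly, even after carrying this out you would still face the injectivity problem above: you would need to show that if $g_X$ is equivalent (under your unit action) to $\psi^l-1$, then in fact $X\simeq L_{K(1)}S^0$. The paper's spectral-sequence approach is more economical here because the vanishing of $d_3(\iota_Z)$ immediately produces the required equivalence, so surjectivity and injectivity of the detecting map are proved by the same mechanism.
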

\begin{proof}
Let $Z$ be an arbitrary element of $\kappa_1$; the key point is that $KU_*Z $ and $ KU_*$ are isomorphic as $KU_*$-modules, and the isomorphism respects the $\Z_2^\times$-action. (Recall that we are working $K(1)$-locally, so $KU$ really means the $2$-completion of $KU$.) Therefore the $E_2$-term for the $K(1)$-local Adams-Novikov spectral sequence for $Z$ coincides with that for the sphere, although the differentials may be different. By~\cite[Rem.3.4]{goerss2012hopkins} (see also the work of~\cite{hoveysadofsky,shimomura}) there is a group homomorphism \[\tau:\kappa_1 \to H^3(\Z_2^\times,(KU)_2) \simeq \Z/2,\] defined in the following way. Let $\iota_Z \in H^0(\Z_2^\times,(KU)_0Z) \simeq \Z_2$ be the identity element. The first (and indeed the only) possible differential in the $K(1)$-local ANSS for $Z$ is a $d_3$. Given a choice of a $\Z_2^\times$-equivariant isomorphism $f:KU_* \xrightarrow{\simeq} KU_*Z$, we have a diagram
\[
\xymatrix{
H^0(\Z_2^\times,(KU)_0) \ar@{-->}[r]^{\phi}  \ar[d]_{f_*}^{\simeq} &H^3(\Z_2^\times,(KU)_2) \ar[d]_{\simeq}^{f_*} \\
H^0(\Z_2^\times,(KU)_0Z) \ar[r]_{d_3} & H^{3}(\Z_2^\times,(KU)_2Z).
}
\]
and we define $\tau(Z) = \phi(\iota_Z) = f_*^{-1} d_3 f_*(\iota_Z)$. This  does not depend on the choice of $f$ and defines the claimed group homomorphism $\kappa_1 \to H^3(\Z_2^\times,(KU)_2) \simeq \Z/2$. 

Note that $Z \simeq L_{K(1)}S^0$ if and only if $\iota_Z$ survives the spectral sequence. The $E_2$-term of the spectral sequence for $Z$ is a free module of rank one over the $E_2$-term of that for the sphere, generated by the class $\iota_Z$, and this fully determines the $d_3$ differential for $Z$. Standard calculations (for example~\cite{hoveysadofsky}) show that $E_4=E_\infty$ and therefore the only differential possible is a $d_3$. This implies that $P$ maps to the non-trivial element of $\Z/2$ under the map $\tau$, and so $\tau$ is a surjection. It is also injective for if $d_3(\iota_Z) = 0$, then $\iota_Z$ is a permanent cycle and the resulting map extends to an equivalence $Z \simeq L_{K(1)}S^0$. 
\end{proof}

As an additional application, we can use~\Cref{prop:PKO,prop:ordertwo} to compute the $K(1)$-local Spanier-Whitehead dual of $KO$, thus recovering the result of~\cite[Lemma 8.16]{hahn2007iwasawa}.
\begin{cor}
	The $K(1)$-local Spanier-Whitehead dual of $KO$ is given by $D_1(KO) \simeq \Sigma^{-1} KO$. 
\end{cor}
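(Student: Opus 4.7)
The plan is to combine the various dualities already established to express $D_1(KO)$ in terms of Anderson duality and the exotic element $P$. Starting from the general relationship~\eqref{eq:grossSW} between Gross-Hopkins duality and $K(n)$-local Spanier-Whitehead duality, we have
\[ I_1 KO \simeq D_1 KO \wedge I_1. \]
By~\Cref{cor:K1localBCdual}, the left-hand side is $\Sigma^5 KO$, while by the definition of $P$ we have $I_1 \simeq \Sigma^2 P$. Substituting these identifications gives
\[ \Sigma^5 KO \simeq D_1 KO \wedge \Sigma^2 P, \]
or equivalently $\Sigma^3 KO \simeq D_1 KO \wedge P$ after desuspending.

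To isolate $D_1 KO$ I would smash both sides with $P$ and invoke~\Cref{prop:ordertwo}, which says $P\wedge P \simeq S$, to obtain
\[ D_1 KO \simeq \Sigma^3 KO \wedge P. \]
The key remaining input is~\Cref{prop:PKO}, which computes $P \wedge KO \simeq \Sigma^4 KO$. Plugging this in yields
\[ D_1 KO \simeq \Sigma^{3+4} KO = \Sigma^7 KO \simeq \Sigma^{-1} KO, \]
where the last equivalence uses the Bott $8$-periodicity of $KO$.

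There is essentially no obstacle here beyond bookkeeping: all the hard work has been done in establishing~\Cref{cor:K1localBCdual},~\Cref{prop:PKO}, and~\Cref{prop:ordertwo}, and once one recognizes that $I_1\simeq \Sigma^2 P$ by the very definition of $P$, the derivation is a short manipulation of smash products in $\Pic_1$. The only subtle point to verify is that we are genuinely working in the $K(1)$-local category throughout, so that the order-two relation $P\wedge P\simeq S$ and the identity $P\wedge KO\simeq \Sigma^4 KO$ can be freely used; this is consistent with the convention adopted at the beginning of~\Cref{sec:picard}.
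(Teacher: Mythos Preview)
Your proof is correct and essentially mirrors the paper's argument. The paper starts from $D_1KO=F(KO,S)\simeq F(KO,P)\wedge P$ (using $P\wedge P\simeq S$) and then identifies $F(KO,P)\simeq F(KO,\Sigma^{-1}I_\Z)\simeq\Sigma^3KO$ via~\Cref{theorem:andersonKO}, whereas you reach the same intermediate identity $D_1KO\simeq\Sigma^3KO\wedge P$ via~\eqref{eq:grossSW} and~\Cref{cor:K1localBCdual}; both then finish with~\Cref{prop:PKO} and $8$-periodicity.
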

\begin{proof}
	We have the series of equivalences
	\begin{align*}
	D_1KO &\simeq F(KO,S) \simeq F(KO,P) \wedge P \\
	&\simeq F(KO,\Sigma^{-1}I_\Z) \wedge P 	\simeq \Sigma^{-1} \Sigma^4 KO \wedge P \\
	&\simeq \Sigma^{-1} KO.
\end{align*}
\end{proof}

\bibliographystyle{amsalpha}
	\bibliography{biblio}

\end{document}